\theoremstyle{plain}
\newtheorem{Thm}{Theorem}[section]
\newtheorem{Lemma}[Thm]{Lemma}
\newtheorem{Cor}[Thm]{Corollary}
\newtheorem{definition}[Thm]{Definition}
\newtheorem*{Thm*}{Theorem}
\theoremstyle{remark}
\newtheorem{Rmk}[Thm]{\bf{Remark}}
\journal{Computers \& Mathematics with Applications}
\begin{document}
\begin{frontmatter}
\title{
A New Condition for Blow-up Solutions to Discrete Semilinear Heat Equations on Networks\\
}

\author[syc]{Soon-Yeong Chung}
\address[syc]{Department of Mathematics and Program of Integrated Biotechnology, Sogang University, Seoul 04107, Korea}
\ead{sychung@sogang.ac.kr}

\author[mjc]{Min-Jun Choi\corref{cjhp}}
\address[mjc]{Department of Mathematics, Sogang University, Seoul 04107, Korea}
\cortext[cjhp]{Corresponding author}
\ead{dudrka2000@sogang.ac.kr}

\begin{abstract}

The purpose of this paper is to introduce a new condition
\begin{center}
(C)$\hspace{1cm} \alpha \int_{0}^{u}f(s)ds \leq uf(u)+\beta u^{2}+\gamma,\,\,u>0$
\end{center}
for some $\alpha, \beta, \gamma>0$ with $0<\beta\leq\frac{\left(\alpha-2\right)\lambda_{0}}{2}$, where $\lambda_{0}$ is the first eigenvalue of discrete Laplacian $\Delta_{\omega}$, with which we obtain blow-up solutions to discrete semilinear heat equations
\begin{equation*}
\begin{cases}
u_{t}\left(x,t\right)=\Delta_{\omega}u\left(x,t\right)+f(u(x,t)), & \left(x,t\right)\in S\times\left(0,+\infty\right),\\
u\left(x,t\right)=0, & \left(x,t\right)\in\partial S\times\left[0,+\infty\right),\\
u\left(x,0\right)=u_{0}\geq0(nontrivial), & x\in\overline{S}
\end{cases}
\end{equation*}
on a discrete network $S$. In fact, it will be seen that the condition (C) improves the conditions known so far.
\end{abstract}

\begin{keyword}
Semilinear Heat Equations, Discrete Laplacian, Comparison Principle, Blow-up.
\MSC [2010] 39A12 \sep 39A13 \sep 39A70 \sep 35K57
\end{keyword}
\end{frontmatter}

\section{Introduction}

These days, the reaction-diffusion systems have found many applications ranging from chemical and biological phenomena to medicine, genetics, and so on. A typical example of the reaction-diffusion system is an auto-catalytic chemical reaction between several chemicals in which the concentration of each chemical grows (or decays) due to diffusion and difference of concentration (according to Fick$'$s law, for example) and whose phenomena is modeled by the reaction-diffusion system
\begin{equation}\label{RDS}
u_{t}\left(x,t\right)={\displaystyle \sum_{x\in\overline{S}}\left[u\left(y,t\right)-u\left(x,t\right)\right]\omega\left(x,y\right)+u^{q}\left(x,t\right),\:} (x,t)\in S\times(0,\infty)
\end{equation}
with some boundary and initial conditions where $S$ is the set of chemicals.

From a similar point of view, we discuss,in this paper, the blow-up property of solutions to the following discrete semilinear heat equations
\begin{equation}\label{inteq}
\left\{
  \begin{array}{ll}
    u_{t}(x,t) = \Delta_{\omega}u(x,t) + f(u(x,t)), & \hbox{$(x,t)\in S\times (0, +\infty)$,} \\
    u(x,t)=0, & \hbox{$(x,t)\in \partial S \times [0, +\infty)$,} \\
    u(x, 0)=u_{0}(x)\geq 0, & \hbox{$x\in \overline{S},$}
  \end{array}
\right.
\end{equation}
which generalizes the equation \eqref{RDS} and where $\Delta_{\omega}$  denotes the discrete Laplacian operator (which will be introduced in Section \ref{preli}).

The continuous case of this equation has been studied by many authors. For example, in $1973$, Levine \cite{L} considered the formally parabolic equations of the form
\[
\begin{cases}
P\frac{du}{dt}=-A(t)u + f(u(t)), & t\in [0, +\infty),\\
u(0)=u_{0},
\end{cases}
\]
where $P$ and $A(t)$ are positive linear operators defined on a dense subdomain $D$ of a real or complex Hilbert space $H$. Here, he first introduced the concavity method and proved that there exists a time $T$ such that
\[
\lim_{t\rightarrow T^{-}}\int_{0}^{t}\int_{\Omega}u(x,s)P(u(x,s))dxds=+\infty,
\]
under the condition
\[
\begin{aligned}
\hbox{(A) \hspace{1cm} $(2+\epsilon)F(u)\leq uf(u),\,\,u>0$},
\end{aligned}
\]
for some $\epsilon>0$ and the initial data $u_{0}$ satisfying
\[
\frac{1}{2}\int_{\Omega}u_{0}(x)\cdot A(0)[u_{0}(x)]dx + \int_{\Omega}F(u_{0}(x))dx>0,
\]
where $F(u)=\int_{0}^{u}f(s)ds$.

After this, Philippin and Proytcheva \cite{PP} have applied the above method to the equations
\begin{equation}\label{IBBequatio}
\begin{cases}
u_{t}=\Delta u + f(u), & \hbox{in}\,\,\Omega\times(0,+\infty),\\
u(x,t)=0, & \hbox{on}\,\, \partial \Omega\times(0,+\infty),\\
u(x,0)=u_{0}(x)\geq0,
\end{cases}
\end{equation}
and obtained a blow-up solution, under the condition (A) and the initial data $u_{0}$ satisfying
\[
-\frac{1}{2}\int_{\Omega}|\nabla u_{0}(x)|^{2}dx + \int_{\Omega}F(u_{0}(x))dx>0.
\]
Recently, Ding and Hu \cite{DH} adopted the condition (A) to get blow-up solutions to the equation
\[
(g(u))_{t}=\nabla\cdot(\rho(|\nabla u|^{2})\nabla u) + k(t)f(u)
\]
with the nonnegative initial value and the null Drichlet boundary condition.

Besides, in \cite{PS, PPP} Payne et al. obtained the blow-up solutions to the equations
\begin{equation}\label{IBBequation}
\begin{cases}
u_{t}=\Delta u - g(u), & \hbox{in}\,\,\Omega\times(0,+\infty),\\
\frac{\partial u}{\partial n}=f(u), & \hbox{on}\,\, \partial \Omega\times(0,+\infty),\\
u(x,0)=u_{0}(x)\geq0,
\end{cases}
\end{equation}
when the Neumann boundary data $f$ satisfies the condition (A).

On the other hands, the condition (A) was relaxed by Bandle and Brunner \cite{BB} and has been applied to the equations
\begin{equation}\label{IBBequation}
\begin{cases}
u_{t}=\Delta u + f(x,t,u,\nabla u), & \hbox{in}\,\,\Omega\times(0,+\infty),\\
u(x,t)=0, & \hbox{on}\,\, \partial \Omega\times(0,+\infty),\\
u(x,0)=u_{0}(x)\geq0.
\end{cases}
\end{equation}
In fact, they introduced a condition
\[
\begin{aligned}
\hbox{(B) \hspace{1cm} $(2+\epsilon)F(u)\leq uf(u) + \gamma ,\,\,u>0$}
\end{aligned}
\]
and derived the blow-up solutions to the equation \eqref{IBBequation}, under the condition (B) and the initial data $u_{0}$ satisfying
\[
-\frac{1}{2}\int_{\Omega}|\nabla u_{0}(x)|^{2}dx+\int_{\Omega}[F(x,u_{0})-\gamma]dx>0,
\]
for some $\epsilon>0$. \\

Looking into the concavity method more closely, we can see that the proof consists of a series of inequalities with reasoning and the Poincare inequality including the eigenvalue. But the conditions (A) and (B) above are independent of the eigenvalue which depends on the domain. From this observation, we can expect to develop an improved condition which refines (A) or (B), depending on the domain. Being motivated by this point of view, we develop a new condition as follows: for some $\alpha, \beta, \gamma>0$,
\[
\hbox{(C)\hspace{1cm} $\alpha F\left(u\right)\leq uf(u) + \beta u^{2} + \gamma,\,\,u>0$},
\]
where $0<\beta\leq\frac{\left(\alpha-2\right)\lambda_{0}}{2}$ and $\lambda_{0}$ is the first eigenvalue of the discrete Laplacian $\Delta_{\omega}$. Here, we note that the term $\beta u^{2}$ is depending on the domain graph.

In fact, it is expected that, with the condition (C), more interesting results should be obtained even in the continuous case, which will be our forth-coming work.

The blow-up solutions or global existence to the discrete equation \eqref{inteq} with the case $f(u)=u^{q}$, was already studied in \cite{CL} and \cite{XXM}.
Moreover, in \cite{CC} and \cite{Ch}, the authors gave a complete solutions under general case of Laplacian ($p$-Laplacian) with $f(u)=u^{q}$. Besides, the long time behavior (extinction and positivity) of solutions to discrete evolution Laplace equation with absorption on networks was studied in \cite{CLC} and \cite{LC}.\\

The main theorem of this paper is as follows:
\begin{Thm*}[Concavity Method]
For the function $f$ with the hypothesis (C), if the initial data $u_{0}$ satisfies
\begin{equation}
-\frac{1}{4}\sum_{x,y\in\overline{S}}\left[u_{0}\left(x\right)-u_{0}\left(y\right)\right]^{2}\omega\left(x,y\right)+\sum_{x\in\overline{S}}\left[F(u_{0}\left(x\right))-\gamma\right]>0,
\end{equation}
then the solutions $u$ to the equation \eqref{inteq} blow up at finite time $T^{*}$ in the sense of
\[
\lim_{t\rightarrow T^{*}}\int_{0}^{t}\sum_{x\in\overline{S}}u^{2}\left(x,s\right)ds=+\infty,
\]
where $\gamma$ is the constant in the condition (C).
\end{Thm*}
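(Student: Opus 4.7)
The plan is to adapt Levine's classical concavity method to the discrete network setting; the new ingredient compared with conditions (A) and (B) of the previous literature is the term $\beta u^{2}$ in (C), which I will absorb via the discrete Poincar\'{e} inequality $\lambda_{0}\sum_{x}u^{2}\leq\tfrac{1}{2}\sum_{x,y\in\overline{S}}[u(x)-u(y)]^{2}\omega(x,y)$. The hypothesis $\beta\leq(\alpha-2)\lambda_{0}/2$ is exactly what makes this absorption possible, and is the only place in the argument where the first eigenvalue $\lambda_{0}$ enters.

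First I would derive an energy identity. Multiplying the equation by $u_{t}$, summing over $\overline{S}$, applying the discrete Green's identity $\sum_{x}u_{t}\Delta_{\omega}u=-\tfrac{1}{4}\tfrac{d}{dt}\sum_{x,y}[u(x)-u(y)]^{2}\omega$, and integrating in time yields
\[
H(t):=-\frac{1}{4}\sum_{x,y\in\overline{S}}[u(x,t)-u(y,t)]^{2}\omega(x,y)+\sum_{x\in\overline{S}}\bigl[F(u(x,t))-\gamma\bigr]=H(0)+\int_{0}^{t}\sum_{x\in\overline{S}}u_{s}^{2}(x,s)\,ds.
\]
Since the hypothesis on the initial data reads exactly $H(0)>0$, the monotonicity of $H$ gives $H(t)\geq H(0)>0$ on the interval of existence.

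Next I would introduce the Levine-type functional $\phi(t):=\int_{0}^{t}\sum_{x}u^{2}(x,s)\,ds+(T-t)\sum_{x}u_{0}^{2}(x)$ for a parameter $T>0$ to be fixed, so that $\phi(0)>0$ and, crucially, $\phi'(0)=0$. A second Green computation gives $\phi''(t)=-\sum_{x,y}[u(x)-u(y)]^{2}\omega+2\sum_{x}uf(u)$. Applying (C) to $\sum uf(u)$, substituting the energy identity to rewrite $\sum F(u)$ in terms of $H(t)$ and the Dirichlet energy, and then absorbing the leftover Dirichlet-energy term with the Poincar\'{e} inequality, I obtain
\[
\phi''(t)\geq\bigl[(\alpha-2)\lambda_{0}-2\beta\bigr]\sum_{x}u^{2}+2\alpha H(t)+2(\alpha-1)\sum_{x}\gamma\geq 2\alpha H(0)+2\alpha\int_{0}^{t}\sum_{x}u_{s}^{2}\,ds,
\]
the bracket being $\geq 0$ by the standing hypothesis on $\beta$. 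At the same time, Cauchy--Schwarz applied to $\phi'(t)=2\int_{0}^{t}\sum_{x}uu_{s}\,ds$ gives $(\phi'(t))^{2}\leq 4\phi(t)\int_{0}^{t}\sum_{x}u_{s}^{2}\,ds$, the dropped $(T-t)\sum u_{0}^{2}$ piece being nonnegative.

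With the choice $\delta:=(\alpha-2)/2>0$, these two estimates combine to the strict concavity inequality $\phi\phi''-(1+\delta)(\phi')^{2}\geq 2\alpha H(0)\phi>0$, which gives $(\phi^{-\delta})''\leq -2\alpha\delta H(0)\phi^{-\delta-1}<0$. Since $\phi^{-\delta}$ is a positive strictly concave function with $(\phi^{-\delta})'(0)=0$, a standard ODE comparison (multiply by $(\phi^{-\delta})'$ and integrate) forces it to vanish at a finite time $T^{*}$ bounded by a constant multiple of $\phi(0)^{1/2}$; choosing $T$ large enough that $T^{*}<T$ produces $\phi(t)\to\infty$, and hence $\int_{0}^{t}\sum u^{2}\to\infty$, as $t\to T^{*}$. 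The main obstacle of the argument is the algebraic bookkeeping in the third step that isolates the coefficient $(\alpha-2)\lambda_{0}-2\beta$ of $\sum u^{2}$; this is the essential new point relative to conditions (A) and (B), and everything else is routine Levine-style concavity.
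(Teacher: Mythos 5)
Your proposal is correct, and its core — the identity $H(t)=H(0)+\int_0^t\sum_x u_s^2$, the use of condition (C) together with the Poincar\'e/eigenvalue inequality of Lemma \ref{Lemma 2.2.} to absorb the $\beta u^2$ term via $(\alpha-2)\lambda_0-2\beta\geq0$, and a Levine concavity inequality — is exactly the paper's mechanism. The differences are in the auxiliary functional and the endgame. The paper takes $I(t)=\int_0^t\sum_x u^2\,ds+M$ with a large constant $M$; then $I'(0)=\sum_x u_0^2>0$, so the conclusion follows from the plain linear bound $(I^{-\xi})'(t)\leq(I^{-\xi})'(0)<0$, but the price is that $I'(t)=2\int_0^t\sum_x uu_s\,ds+\sum_x u_0^2$ carries a cross term, forcing a weighted Schwarz/Young step with parameters $\delta,1/\delta$ and the choice of $M$ large enough to dominate $(1+\xi)(1+1/\delta)[\sum_x u_0^2]^2$. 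Your $(T-t)\sum_x u_0^2$ weight makes $\phi'(t)=2\int_0^t\sum_x uu_s\,ds$ exactly, so Cauchy--Schwarz is clean and no large constant is needed; the price is $\phi'(0)=0$, so the linear bound on $\phi^{-\delta}$ degenerates and you must instead exploit the strictly positive defect $\phi\phi''-(1+\delta)(\phi')^2\geq2\alpha H(0)\phi$ to show $(\phi^{-\delta})'$ becomes negative and then run the concavity argument — which works, and your stated bound $T^*\lesssim\phi(0)^{1/2}=(T\sum_x u_0^2)^{1/2}$ is consistent, so $T$ can indeed be chosen with $T^*<T$. Your exponent $\delta=(\alpha-2)/2$ (which kills the $\phi\int\sum u_s^2$ term exactly) differs from the paper's $\xi=\sqrt{\alpha/2}-1$ (tuned to the weighted Schwarz step); both are legitimate. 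One cosmetic caveat: in your second display the inequality $\phi''\geq[(\alpha-2)\lambda_0-2\beta]\sum_xu^2+2\alpha H(t)+2(\alpha-1)\gamma|\overline{S}|$ is the correct bookkeeping only after writing $\sum_xF(u)=H(t)+\tfrac14\sum_{x,y}[u(x)-u(y)]^2\omega(x,y)+\gamma|\overline{S}|$, and it silently uses $\alpha>2$, which does follow from $0<\beta\leq(\alpha-2)\lambda_0/2$ with $\beta,\lambda_0>0$; it would be worth saying so explicitly.
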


We organize this paper as follows: in Section \ref{preli}, we introduce briefly the preliminary concepts on networks and comparison principles. Section \ref{BCM} is the main section, which is devoted to blow-up solutions using the concavity method with the condition (C). Finally in Section \ref{(C)}, we discuss the condition (C), comparing with the conditions (A) and (B), together with the condition   $J(0)>0$ for the initial data.


\section{Preliminaries and Discrete Comparison Principles}\label{preli}
In this section, we start with the theoretic graph notions frequently used throughout this paper. For more detailed information on notations, notions, and conventions, we refer the reader to \cite{CB}.

\begin{definition}
\begin{enumerate}[(i)]
\item  A graph $G=G\left(V,E\right)$ is a finite set $V$ of $vertices$ with a set $E$ of $edges$ (two-element subsets of $V$). Conventionally used, we denote by $x\in V$ or $x\in G$ the fact that $x$ is a vertex in $G$.
\item A graph $G$ is called $simple$ if it has neither multiple edges nor loops
\item $G$ is called $connected$ if for every pair of vertices $x$ and $y$, there exists a sequence(called a $path$) of vertices $x=x_{0},x_{1},\cdots,x_{n-1},x_{n}=y$ such that $x_{j-1}$ and $x_{j}$ are connected by an edge(called $adjacent$) for $j=1,\cdots,n$.
\item A graph $G'=G'\left(V',E'\right)$ is called a $subgraph$ of $G\left(V,E\right)$ if $V'\subset V$ and $E'\subset E$. In this case, $G$ is a host graph of $G'$. If $E'$ consists of all the edges from $E$ which connect the vertices of $V'$ in its host graph $G$, then $G'$ is called an induced subgraph.
\end{enumerate}
\end{definition}
 We note that an induced subgraph of a connected host graph may not be connected.

Throughout this paper, all the subgraphs are assumed to be induced, simple and connected.

\begin{definition}
A $weight$ on a graph $G$ is a symmetric function $\omega\,:\,V\times V\rightarrow\left[0,+\infty\right)$ satisfying the following:
\begin{enumerate}[(i)]
\item $\omega\left(x,x\right)=0$, \,\,$x\in V$,
\item $\omega\left(x,y\right)=\omega\left(y,x\right)$ if $x\sim y$,
\item $\omega\left(x,y\right)>0$ if and only if $\{x,y\}\in E$,
\end{enumerate}
and a graph $G$ with a weight $\omega$ is called a $network$.
\end{definition}

\begin{definition}
For an induced subgraph $S$ of a $G=G\left(V,E\right)$, the (vertex) $boundary$ $\partial S$ of $S$ is defined by
\[
\partial S:=\{z\in V\setminus S\,|\, \hbox{$z\sim y$ for some $y\in S$}\}.
\]
\end{definition}

Also, we denote by $\overline{S}$ a graph whose vertices and edges are in $S\cup\partial S$. We note that by definition the set $\overline{S}$ is an induced subgraph of $G$.

\begin{definition}
The \emph{degree} $d_{\omega}x$ of a vertex $x$ in a network $S$ (with boundary $\partial S$) is defined by
\[
d_{\omega}x:=\sum_{y\in\overline{S}}\omega\left(x,y\right).
\]
\end{definition}

\begin{definition}
For a function $u\,:\,\overline{S}\rightarrow\mathbb{R}$, the \emph{discrete Laplacian} $\Delta_{\omega}$ on $S$ is defined by
\[
\Delta_{\omega}u\left(x\right):=\sum_{y\in\overline{S}}\left[u\left(y\right)-u\left(x\right)\right]\omega\left(x,y\right)
\]
for $x\in S$.
\end{definition}

The following two lemmas are used throughout this paper.
\begin{Lemma}[See \cite{PC}, \cite{PaC}]\label{Lemma 2.1.}
For functions $f,~g\,:\,\overline{S}\to\mathbb{R}$,
the discrete Laplacian $\Delta_\omega$ satisfies that
$$
2\sum_{x\in\overline{S}}g\left(x\right)\left[-\Delta_{\omega}f\left(x\right)\right]
=\sum_{x,y\in\overline{S}}\left[f\left(y\right)-f\left(x\right)\right]\cdot\left[g\left(y\right)-g\left(x\right)\right]\omega\left(x,y\right).
$$
In particular, in the case $g=f$, we have
$$
2\sum_{x\in\overline{S}}f\left(x\right)\left[-\Delta_{\omega}f\left(x\right)\right]=\sum_{x,y\in\overline{S}}\left[f\left(x\right)-f\left(y\right)\right]^{2}\omega\left(x,y\right).
$$
\end{Lemma}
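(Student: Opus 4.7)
The plan is a direct index-manipulation argument that converts the right-hand side into the left-hand side using only the definition of $\Delta_\omega$ and the symmetry $\omega(x,y)=\omega(y,x)$. First, I would rewrite the summand on the right by grouping,
\[
[f(y)-f(x)][g(y)-g(x)] = g(y)[f(y)-f(x)] - g(x)[f(y)-f(x)],
\]
so that the double sum on $\overline{S}\times\overline{S}$ splits into two pieces, one carrying the prefactor $g(y)$ and one carrying $g(x)$.

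Next, in the piece with prefactor $g(y)$ I would swap the dummy indices $x\leftrightarrow y$ and invoke $\omega(x,y)=\omega(y,x)$. This converts $\sum_{x,y}g(y)[f(y)-f(x)]\omega(x,y)$ into $\sum_{x,y}g(x)[f(x)-f(y)]\omega(x,y)$, which is exactly the negative of the second piece. Adding the two pieces, the right-hand side collapses to
\[
-2\sum_{x,y\in\overline{S}}g(x)[f(y)-f(x)]\omega(x,y)=-2\sum_{x\in\overline{S}}g(x)\sum_{y\in\overline{S}}[f(y)-f(x)]\omega(x,y).
\]
The inner sum is precisely $\Delta_\omega f(x)$ by the definition of the discrete Laplacian, so the right-hand side equals $2\sum_{x\in\overline{S}}g(x)[-\Delta_\omega f(x)]$, which is the left-hand side of the identity.

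Finally, the special case $g=f$ follows by immediate substitution, noting that $[f(y)-f(x)]^{2}=[f(x)-f(y)]^{2}$ accounts for the ordering of the difference shown in the displayed second identity. There is no serious obstacle in this proof: the entire argument is finite index bookkeeping. The only points of care are to use the symmetry of $\omega$ exactly at the relabeling step, and to interpret $\Delta_\omega f(x)$ for $x\in\partial S$ via the same defining formula (which is consistent because the expression produced by the manipulation is literally that sum). Both points are standard conventions in the discrete setting.
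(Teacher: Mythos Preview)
Your argument is correct and is the standard symmetrization proof of this discrete ``integration by parts'' identity. The paper itself does not supply a proof of Lemma~\ref{Lemma 2.1.}; it merely cites \cite{PC,PaC}, so there is no in-paper argument to compare against. Your remark about interpreting $\Delta_\omega f(x)$ for $x\in\partial S$ via the same defining formula is apt, since the paper's Definition only states the formula for $x\in S$ while the lemma sums over $x\in\overline{S}$; extending the formula verbatim to boundary vertices (equivalently, noting that in the paper's applications one always has $g|_{\partial S}=0$, so boundary terms vanish) is exactly how this discrepancy is resolved in the cited references.
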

\begin{Lemma}[See \cite{PC}, \cite{PaC}]\label{Lemma 2.2.}

There exist $\lambda_{0}>0$ and $\phi_{0}\left(x\right)>0$, $x\in S$
such that
\[
\begin{cases}
-\Delta_{\omega}\phi_{0}\left(x\right)=\lambda_{0}\phi_{0}\left(x\right), & x\in S,\\
\phi_{0}\left(x\right)=0, & x\in\partial S,\\
{\displaystyle \sum_{x\in S}}\phi_{0}^{2}\left(x\right)=1.
\end{cases}
\]
Moreover, $\lambda_{0}$ is given by
\[
\begin{aligned}
\lambda_{0} =& \min_{u\in\mathcal{A},u\not\equiv0}\frac{\frac{1}{2}{\displaystyle\sum_{x,y\in\overline{S}}}\left[u\left(x\right)-u\left(y\right)\right]^{2}\omega\left(x,y\right)}{{\displaystyle\sum_{x\in\overline{S}}}\left[u\left(x\right)\right]^{2}}\\
\leq & \,\,d_{\omega}x, x\in S
\end{aligned}
\]
where $\mathcal{A}:=\left\{ u\,:\,\overline{S}\rightarrow\mathbb{R}\,|\, u=0\mbox{ on }\partial S\right\}$.
\end{Lemma}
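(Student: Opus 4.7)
The plan is to run the classical concavity method of Levine in the discrete setting. I introduce the auxiliary functional
\[
\Phi(t) := \int_{0}^{t}\sum_{x\in\overline{S}}u^{2}(x,s)\,ds + (T_{0}-t)\sum_{x\in\overline{S}}u_{0}^{2}(x)
\]
for a parameter $T_{0}>0$ to be chosen large, and the energy
\[
J(t):=-\frac{1}{4}\sum_{x,y\in\overline{S}}\bigl[u(x,t)-u(y,t)\bigr]^{2}\omega(x,y)+\sum_{x\in\overline{S}}\bigl[F(u(x,t))-\gamma\bigr].
\]
The hypothesis then reads $J(0)>0$, and the goal becomes to prove that $\Phi$ blows up in finite time (since the added linear term is bounded on $[0,T_{0}]$, this is the same as blow-up of $\int_{0}^{t}\sum u^{2}\,ds$).

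The first step is to differentiate $J$: applying Lemma~\ref{Lemma 2.1.} with $g=u_{t}$, $f=u$, and using the boundary condition $u\equiv 0$ on $\partial S$ (so $u_{t}\equiv 0$ on $\partial S$) together with the equation on $S$, I get $J'(t)=\sum_{x\in S}u_{t}^{2}(x,t)\ge 0$, so $J(t)\ge J(0)>0$ and $\int_{0}^{t}\|u_{t}\|^{2}\,ds=J(t)-J(0)$. The second step is to compute $\Phi'(t)=\|u(t)\|^{2}-\|u_{0}\|^{2}=2\int_{0}^{t}\sum_{x\in\overline{S}}u\,u_{t}\,ds$ (this is why the linear term was added) and
\[
\Phi''(t)=2\sum_{x\in\overline{S}}u(x,t)\bigl[\Delta_{\omega}u+f(u)\bigr]=-\sum_{x,y\in\overline{S}}[u(x,t)-u(y,t)]^{2}\omega(x,y)+2\sum_{x\in\overline{S}}u\,f(u),
\]
using the $g=f=u$ case of Lemma~\ref{Lemma 2.1.}.

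Now the heart of the argument: condition (C) gives $uf(u)\ge\alpha F(u)-\beta u^{2}-\gamma$, and expressing $\sum F(u)$ back in terms of $J$ plus the Dirichlet form yields, after a short rearrangement,
\[
\Phi''(t)\ge 2\alpha J(t)+\frac{\alpha-2}{2}\sum_{x,y\in\overline{S}}[u(x,t)-u(y,t)]^{2}\omega(x,y)-2\beta\sum_{x\in\overline{S}}u^{2}(x,t)+2(\alpha-1)\gamma|\overline{S}|.
\]
This is where the domain-dependent improvement in~(C) pays off: Lemma~\ref{Lemma 2.2.} gives $\sum[u(x)-u(y)]^{2}\omega\ge 2\lambda_{0}\sum u^{2}$, and the constraint $\beta\le\frac{(\alpha-2)\lambda_{0}}{2}$ is precisely what is needed to absorb the $-2\beta\sum u^{2}$ term, leaving
\[
\Phi''(t)\ge 2\alpha J(t)\ge 2\alpha J(0)+2\alpha\!\int_{0}^{t}\!\|u_{t}\|^{2}\,ds.
\]
Finally, Cauchy--Schwarz applied to $\Phi'(t)=2\int_{0}^{t}\sum u u_{t}\,ds$ yields $(\Phi'(t))^{2}\le 4\Phi(t)\int_{0}^{t}\|u_{t}\|^{2}\,ds$, so combining with the above bound,
\[
\Phi(t)\Phi''(t)-\tfrac{\alpha}{2}(\Phi'(t))^{2}\ge 2\alpha\Phi(t)J(0)>0.
\]
Setting $\delta:=\alpha/2-1>0$, this is the standard concavity inequality $(\Phi^{-\delta})''\le-2\alpha\delta J(0)\Phi^{-\delta-1}<0$; since $\Phi^{-\delta}$ is positive, concave, and strictly decreasing for $t>0$ (note $\Phi'(0)=0$ but $\Phi''(0)>0$), it must hit $0$ in finite time $T^{*}$, forcing $\Phi(t)\to\infty$.

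The main obstacle I foresee is purely bookkeeping: one has to arrange the linear correction in $\Phi$ so that the $\|u_{0}\|^{2}$ discrepancy between $\Phi'$ and $2\int\sum uu_{t}\,ds$ drops out cleanly before Cauchy--Schwarz, and one has to carefully unwind the identity $\sum F(u)=J+\tfrac14\sum[u(x)-u(y)]^{2}\omega+\gamma|\overline{S}|$ so that the $\beta u^{2}$-slack in (C) lines up exactly with Lemma~\ref{Lemma 2.2.}; once those line up, strict positivity of $J(0)$ closes the concavity estimate with no further input.
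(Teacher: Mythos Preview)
You have proved the wrong statement. The lemma you were asked to establish is the existence and Rayleigh-quotient characterization of the first Dirichlet eigenvalue $\lambda_{0}$ of $-\Delta_{\omega}$ on the network $\overline{S}$, together with the positivity of the first eigenfunction $\phi_{0}$ on $S$ and the bound $\lambda_{0}\le d_{\omega}x$. What you wrote is instead a (correct) proof of the paper's main blow-up theorem, Theorem~\ref{BlowB}. None of your computations bears on the eigenvalue lemma; in fact your argument \emph{uses} Lemma~\ref{Lemma 2.2.} as an input at the step where you invoke $\sum[u(x)-u(y)]^{2}\omega\ge 2\lambda_{0}\sum u^{2}$ to absorb the $-2\beta\sum u^{2}$ term.

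The paper itself does not prove Lemma~\ref{Lemma 2.2.}: it is quoted from the references \cite{PC,PaC}. A proof would proceed by observing that on the finite-dimensional space $\mathcal{A}$ the quadratic form $u\mapsto\tfrac12\sum_{x,y}[u(x)-u(y)]^{2}\omega(x,y)$ is represented by a symmetric positive-semidefinite matrix, so a minimizer of the Rayleigh quotient exists and satisfies $-\Delta_{\omega}\phi_{0}=\lambda_{0}\phi_{0}$; connectedness of $\overline{S}$ forces $\lambda_{0}>0$; replacing $\phi_{0}$ by $|\phi_{0}|$ and using a strong-maximum-principle / Perron--Frobenius argument yields $\phi_{0}>0$ on $S$; and testing the Rayleigh quotient with the indicator of a single vertex $x_{0}\in S$ gives $\lambda_{0}\le d_{\omega}x_{0}$.

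As an aside, your argument for Theorem~\ref{BlowB} follows the same concavity strategy as the paper, with one cosmetic difference: you add the linear term $(T_{0}-t)\sum u_{0}^{2}$ to make $\Phi'(0)=0$ and to get $(\Phi')^{2}\le 4\Phi\int_{0}^{t}\|u_{t}\|^{2}$ directly, whereas the paper adds a constant $M$ and handles the $\|u_{0}\|^{2}$ defect via the parameter $\delta$ in the Schwarz step, choosing $M$ large at the end. Both routes lead to the same concavity inequality.
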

In the above, the number $\lambda_{0}$ is called the first eigenvalue of $\Delta_{\omega}$ on a network $\overline{S}$ with corresponding eigenfunction $\phi_{0}$ (see \cite{C} and \cite{CDS} for the spectral theory of the Laplacian operators).


We now briefly discuss the local existence and uniqueness of a solution for the equation
\begin{equation}\label{equation}
\begin{cases}
u_{t}\left(x,t\right)=\Delta_{\omega}u\left(x,t\right)+f\left(u\left(x,t\right)\right), & \left(x,t\right)\in S\times\left(0,+\infty\right),\\
u\left(x,t\right)=0, & \left(x,t\right)\in\partial S\times\left[0,+\infty\right),\\
u\left(x,0\right)=u_{0}\left(x\right)\geq0, & x\in\overline{S},
\end{cases}
\end{equation}
where $f$ be locally Lipschitz continuous on $\mathbb{R}$.

Let $t_{0}>0$ be fixed and consider a Banach space
\[
X_{t_{0}}=\{u\,:\,\overline{S}\times\left[0,t_{0}\right]\rightarrow\mathbb{R}\,|\,u\left(x,\cdot\right)\in C\left[0,t_{0}\right]\hbox{ for each }x\in\overline{S}\}
\]
with the norm $\|u\|_{X_{t_{0}}}:=\max_{x\in\overline{S}}\max_{0\leq t\leq t_{0}}\left|u\left(x,t\right)\right|$.

Then it is easy to see that the operator $D\,:\,X_{t_{0}}\rightarrow X_{t_{0}}$ given by
\[
D\left[u\right]\left(x,t\right):=
\begin{cases}
u_{0}\left(x\right)+\int_{0}^{t}\Delta_{\omega}u\left(x,s\right)ds+\int_{0}^{t}f\left(u\left(x,s\right)\right)ds, & \left(x,t\right)\in S\times\left[0,t_{0}\right],\\
0, & \left(x,t\right)\in\partial S\times\left[0,t_{0}\right],
\end{cases}
\]
is well-defined. In Lemma \ref{contraction}, we show that this operator $D$ is contractive on a small closed ball. Hence, we obtain the existence and uniqueness of a solution to the equation \eqref{equation} in a small time interval $\left[0,t_{0}\right]$, as a consequence of Banach$'$s fixed point theorem.
\begin{Lemma}\label{contraction}
Let $f$ be locally Lipschitz continuous on $\mathbb{R}$. Then the operator $D$ is a contraction on the closed ball
\[
B(u_{0}, 2\|u_{0}\|_{X_{t_{0}}}) := \{ u \in X_{t_{0}} | \parallel u - u_{0} \parallel_{X_{t_{0}}} \leq 2\|u_{0}\|_{{X_{t_{0}}}} \}
\]
if $t_{0}$ is small enough.
\begin{proof}
Consider $u$ and $v \in B(u_{0}, 2\|u_{0}\|_{{X_{t_{0}}}})$. Since $f$ is locally Lipschitz continuous on $\mathbb{R}$, there exists $L>0$ such that
\[
|f\left(a\right)-f\left(b\right)|\leq L|a-b|,\,\,a,b\in[-m,m]
\]
where $m=3\|u_{0}\|_{{X_{t_{0}}}}.$ Then for any $(x,t) \in S \times [0,t_{0}]$,
\[
\begin{aligned}
&|D[u](x,t) - D[v](x,t)|\\
&=\left|\int_{0}^{t} \Delta_{\omega}(u-v)(x,s)ds + \int_{0}^{t}\left[f(u(x,s)) - f(v(x,s))\right]ds\right|\\
&\leq 2t|\overline{S}|\max_{(x,y)\in E} \omega(x,y)\|u-v\|_{X_{t_{0}}} + Lt\|u-v\|_{X_{t_{0}}}\\
&\leq C_{1}t_{0}\|u-v\|_{X_{t_{0}}},
\end{aligned}
\]
where $C_{1} = 2|\overline{S}|  \max_{(x,y)\in E}\omega(x,y) + L$ and $|\overline{S}|$ denotes the number of nodes in $\overline{S}$. Moreover, it is easy to see that the above inequality still holds for $(x,t) \in \partial S \times [0,t_{0}]$. Hence choosing $t_{0}$ sufficiently small, we obtain a contraction on the closed ball $B(u_{0}, 2\|u_{0}\|_{{X_{t_{0}}}})$ into itself.
\end{proof}
\end{Lemma}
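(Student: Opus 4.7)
The plan is to run the standard Picard--Banach local-existence scheme, exploiting the fact that on the finite graph $\overline{S}$ the operator $\Delta_{\omega}$ is a bounded linear operator, so the only genuinely nonlinear feature is $f$, which is tamed by its local Lipschitz hypothesis on an a priori ball. First I would fix arbitrary $u,v\in B(u_{0},2\|u_{0}\|_{X_{t_{0}}})$ and note by the triangle inequality that $\|u\|_{X_{t_{0}}},\|v\|_{X_{t_{0}}}\le 3\|u_{0}\|_{X_{t_{0}}}=:m$. The local Lipschitz hypothesis on $f$ then supplies a constant $L=L(m)$ such that $|f(a)-f(b)|\le L|a-b|$ for all $a,b\in[-m,m]$, so in particular $|f(u(x,s))-f(v(x,s))|\le L\|u-v\|_{X_{t_{0}}}$ uniformly in $(x,s)\in\overline{S}\times[0,t_{0}]$.

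Next I would estimate $D[u]-D[v]$ directly from the defining integral formula. For interior points the $u_{0}$-term cancels and the difference splits into a diffusion integral and a reaction integral. For the diffusion integrand, the definition of the discrete Laplacian and the triangle inequality give
\[
|\Delta_{\omega}(u-v)(x,s)|\le\sum_{y\in\overline{S}}\bigl(|(u-v)(y,s)|+|(u-v)(x,s)|\bigr)\omega(x,y)\le 2|\overline{S}|\,\omega_{\max}\,\|u-v\|_{X_{t_{0}}},
\]
where $\omega_{\max}:=\max_{(x,y)\in E}\omega(x,y)$. Combining with the Lipschitz bound on $f$ and integrating over $[0,t]\subset[0,t_{0}]$ yields $|D[u](x,t)-D[v](x,t)|\le(2|\overline{S}|\omega_{\max}+L)\,t_{0}\,\|u-v\|_{X_{t_{0}}}$ for every interior $(x,t)$. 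On $\partial S\times[0,t_{0}]$ both $D[u]$ and $D[v]$ are identically zero, so the same bound holds trivially there. Taking the sup over $(x,t)\in\overline{S}\times[0,t_{0}]$ gives $\|D[u]-D[v]\|_{X_{t_{0}}}\le C_{1}t_{0}\|u-v\|_{X_{t_{0}}}$ with $C_{1}=2|\overline{S}|\omega_{\max}+L$, so any choice of $t_{0}<1/C_{1}$ makes $D$ strictly contractive on the ball.

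To guarantee that $D$ is actually a self-map of the ball (so that calling it a ``contraction on $B(u_{0},2\|u_{0}\|_{X_{t_{0}}})$'' is meaningful for the subsequent application of Banach's fixed point theorem), I would run the same two-term estimate with $v$ replaced by the constant-in-time function $u_{0}(\cdot)$: using that $\|u\|_{X_{t_{0}}}\le m$ and that $f$ is bounded on $[-m,m]$, I would obtain $\|D[u]-u_{0}\|_{X_{t_{0}}}\le C_{2}t_{0}(1+\|u_{0}\|_{X_{t_{0}}})$ for some $C_{2}$ depending only on $m$, the weights and $L$, and then shrink $t_{0}$ further so that the right-hand side is $\le 2\|u_{0}\|_{X_{t_{0}}}$.

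There is no real obstacle in this argument; the only point requiring attention is that the Lipschitz constant $L$ must be taken for the enlarged interval $[-m,m]$ with $m=3\|u_{0}\|_{X_{t_{0}}}$ rather than for the range of $u_{0}$ alone, so that the estimate is uniform over the whole ball. The finite-dimensional nature of $\overline{S}$ ensures that $|\overline{S}|\omega_{\max}<\infty$ without any additional regularity hypothesis on the network.
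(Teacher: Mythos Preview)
Your proposal is correct and follows essentially the same route as the paper: the same choice $m=3\|u_{0}\|_{X_{t_{0}}}$, the same crude bound $|\Delta_{\omega}(u-v)|\le 2|\overline{S}|\,\omega_{\max}\|u-v\|_{X_{t_{0}}}$, and the same constant $C_{1}=2|\overline{S}|\omega_{\max}+L$. If anything your write-up is slightly more complete, since you spell out the self-map verification $D(B)\subset B$ that the paper asserts only in its final sentence.
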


Now, we state two types of comparison principles.
\begin{Thm}[Comparison Principle]\label{comparison principle}
Let $T>0$ ($T$ may be $+\infty$) and $f$ be locally Lipschitz continuous on $\mathbb{R}$. Suppose that real-valued functions $u(x,\cdot)$, $v(x,\cdot) \in C[0, T]$ are differentiable in $(0, T)$ for each $x \in \overline{S}$ and satisfy
\begin{equation}\label{eq1-2}
\begin{cases}
 u_{t}(x,t)-\Delta_{\omega}u(x,t)-f(u(x,t)) & \\
\geq v_{t}(x,t)-\Delta_{\omega}v(x,t)-f(v(x,t)), & (x,t)\in S\times\left(0,T\right),\\
 u(x,t)\geq v(x,t), & (x,t)\in\partial S\times[0,T),\\
 u\left(x,0\right)\geq v\left(x,0\right), & x\in\overline{S}.
\end{cases}
\end{equation}
Then $u\left(x,t\right)\geq v\left(x,t\right)$ for all $(x,t)\in\overline{S}\times[0,T).$
\end{Thm}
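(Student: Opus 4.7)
The plan is a contradiction argument built on an exponential-shift substitution that converts the nonlinear inequality into one where the standard minimum principle on a finite graph yields a \emph{strict} sign at any negative minimum.

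First I would set $w := u - v$, which by hypothesis satisfies
\[
w_t - \Delta_\omega w \geq f(u) - f(v) \quad \text{on } S \times (0,T),
\]
together with $w \geq 0$ on $\partial S \times [0,T)$ and $w(\cdot,0) \geq 0$ on $\overline S$. Fix an arbitrary $\tau \in (0,T)$. Since $\overline S$ is finite and $u,v$ are continuous in $t$, both are bounded on $\overline S \times [0,\tau]$, so by the local Lipschitz assumption on $f$ there is $L>0$ with $|f(u)-f(v)| \leq L|u-v|$ throughout this set. Choose $\lambda > L$ and introduce $\tilde w(x,t) := e^{-\lambda t}w(x,t)$; a direct computation converts the inequality above into
\[
\tilde w_t \geq \Delta_\omega \tilde w - \lambda \tilde w + e^{-\lambda t}\bigl[f(u) - f(v)\bigr] \quad \text{on } S \times (0,T).
\]

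Assume for contradiction that $\tilde w$ (equivalently $w$) takes a negative value somewhere on $\overline S \times [0,\tau]$. This set is compact (a finite union of closed intervals) and $\tilde w$ is continuous on it, so $\tilde w$ attains a negative minimum at some $(x^*,t^*)$. The initial condition forces $t^* > 0$ and the boundary condition forces $x^* \in S$. At $(x^*,t^*)$ one has the usual sign information: $\tilde w_t(x^*,t^*) \leq 0$ (equal to $0$ if $t^* < \tau$; $\leq 0$ via the left derivative if $t^* = \tau$, which is legitimate since $\tau \in (0,T)$ where $\tilde w$ is differentiable in $t$), and
\[
\Delta_\omega \tilde w(x^*,t^*) = \sum_{y\in\overline S}\bigl[\tilde w(y,t^*) - \tilde w(x^*,t^*)\bigr]\omega(x^*,y) \geq 0
\]
because $(x^*,t^*)$ is a global minimum. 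Combining Lipschitz with $\tilde w(x^*,t^*) < 0$ gives
\[
e^{-\lambda t^*}\bigl[f(u)-f(v)\bigr](x^*,t^*) \geq -L\,e^{-\lambda t^*}|w(x^*,t^*)| = L\,\tilde w(x^*,t^*),
\]
so the transformed inequality at $(x^*,t^*)$ becomes
\[
0 \,\geq\, \tilde w_t(x^*,t^*) \,\geq\, 0 + (L-\lambda)\tilde w(x^*,t^*) \,>\, 0,
\]
the last strict inequality using $L-\lambda < 0$ and $\tilde w(x^*,t^*) < 0$. This contradiction forces $\tilde w \geq 0$, hence $u \geq v$, on $\overline S \times [0,\tau]$; since $\tau < T$ is arbitrary, the result extends to $\overline S \times [0,T)$.

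The one delicate point is really the role of the shift $\lambda > L$: without it, the minimum-principle argument closes only to $\tilde w_t(x^*,t^*) \geq 0$, which is compatible with $\tilde w_t \leq 0$ and yields no contradiction. The exponential weight manufactures the positive term $-\lambda \tilde w$ at a negative minimum, which dominates the Lipschitz error and produces the strict inequality. Everything else — the reduction to the compact slab $\overline S \times [0,\tau]$, boundedness of $u,v$, the finite-graph minimum principle, and the interior-versus-endpoint split for $t^*$ — is routine in the discrete setting.
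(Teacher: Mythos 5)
Your proposal is correct and follows essentially the same route as the paper: the exponential weight $e^{-\lambda t}$ with $\lambda$ exceeding the local Lipschitz constant (the paper takes $\lambda=2L$), reduction to a compact slab $\overline S\times[0,\tau]$, location of a putative negative global minimum in $S\times(0,\tau]$, the sign conditions $\tilde w_t\le 0$ and $\Delta_\omega\tilde w\ge 0$ there, and the strict inequality manufactured by the shift term at a negative minimum. No gaps; the treatment of the endpoint case $t^*=\tau$ via the left derivative matches the paper's handling.
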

\begin{proof}
Let $T'>0$ be arbitrarily given with $T'<T$. Since $f$ is locally Lipschitz continuous on $\mathbb{R}$, there exists $L>0$ such that
\begin{equation}\label{eq1-3}
\left|f\left(a\right)-f\left(b\right)\right|\leq L\left|a-b\right|,\,\,a,b\in[-m,m]
\end{equation}
where $m={\displaystyle \max_{x\in\overline{S}}\max_{0\leq t\leq T'}}\left\{\left|u\left(x,t\right)\right|,\left|v\left(x,t\right)\right|\right\}.$
Let $\tilde{u}, \tilde{v}\,:\,\overline{S}\times\left[0,T'\right]\rightarrow\mathbb{R}$ be the functions defined by
\[
\tilde{u}(x,t):=e^{-2Lt}u\left(x,t\right),\,\,
(x,t)\in\overline{S}\times[0,T'].
\]
\[
\tilde{v}(x,t):=e^{-2Lt}v\left(x,t\right),\,\,
(x,t)\in\overline{S}\times[0,T'].
\]

Then from \eqref{eq1-2}, we have
\begin{equation}\label{eq1-4}
\begin{aligned}
&\left[\tilde{u}_{t}\left(x,t\right)-\tilde{v}_{t}\left(x,t\right)\right]-\left[\Delta_{\omega}\tilde{u}\left(x,t\right)-\Delta_{\omega}\tilde{v}\left(x,t\right)\right]\\ &+2L\left[\tilde{u}\left(x,t\right)-\tilde{v}\left(x,t\right)\right]-e^{-2Lt}\left[f\left(u\left(x,t\right)\right)-f\left(v\left(x,t\right)\right)\right]\geq0
\end{aligned}
\end{equation}
for all $\left(x,t\right)\in S\times(0,T']$.

We recall that $\tilde{u}(x,\cdot)$ and $\tilde{v}(x,\cdot)$ are continuous on $[0,T']$ for each $x\in \overline{S}$ and $\overline{S}$ is finite. Hence, we can find
$\left(x_{0},t_{0}\right)\in\overline{S}\times\left[0,T'\right]$ such that
\[
\left(\tilde{u}-\tilde{v}\right)\left(x_{0},t_{0}\right)={\displaystyle \min_{x\in\overline{S}}\min_{0\leq t\leq T'}\left(\tilde{u}-\tilde{v}\right)\left(x,t\right)},
\]
which implies that
\begin{equation}\label{123}
\tilde{v}\left(y,t_{0}\right)-\tilde{v}\left(x_{0},t_{0}\right)\leq \tilde{u}\left(y,t_{0}\right)-\tilde{u}\left(x_{0},t_{0}\right),\,\,\,y\in\overline{S}.
\end{equation}

Then now we have only to show that $\left(\tilde{u}-\tilde{v}\right)\left(x_{0},t_{0}\right)\geq0$.

Suppose that $\left(\tilde{u}-\tilde{v}\right)\left(x_{0},t_{0}\right)<0$, on the contrary. Since $\left(\tilde{u}-\tilde{v}\right)\geq0$ on both $\partial S\times\left[0,T'\right]$ and $\overline{S}$, we have $\left(x_{0},t_{0}\right)\in S\times(0,T']$. Then we obtain from \eqref{123} that
\begin{equation}\label{eq1-5}
\Delta_{\omega}\left(\tilde{u}-\tilde{v}\right)\left(x_{0},t_{0}\right)\geq 0
\end{equation}
and it follows from the differentiability of $\left(\tilde{u}-\tilde{v}\right)\left(x,t\right)$ in $(0,T']$ for each $x\in\overline{S}$ that
\begin{equation}\label{eq1-6}
\left(\tilde{u}_{t}-\tilde{v}_{t}\right)\left(x_{0},t_{0}\right)\leq 0.
\end{equation}

According to \eqref{eq1-3}, we have
\begin{equation}\label{eq1-7}
\begin{aligned}
&2L\left[\tilde{u}\left(x_{0},t_{0}\right)-\tilde{v}\left(x_{0},t_{0}\right)\right]-e^{-2Lt_{0}}\left[f\left(u\left(x_{0},t_{0}\right)\right)-f\left(v\left(x_{0},t_{0}\right)\right)\right]\\
&\leq2L\left[\tilde{u}\left(x_{0},t_{0}\right)-\tilde{v}\left(x_{0},t_{0}\right)\right]+Le^{-2Lt_{0}}\left|u\left(x_{0},t_{0}\right)-v\left(x_{0},t_{0}\right)\right|\\
&=2L\left[\tilde{u}\left(x_{0},t_{0}\right)-\tilde{v}\left(x_{0},t_{0}\right)\right]+L\left|\tilde{u}\left(x_{0},t_{0}\right)-\tilde{v}\left(x_{0},t_{0}\right)\right|\\
&=L\left[\tilde{u}\left(x_{0},t_{0}\right)-\tilde{v}\left(x_{0},t_{0}\right)\right]<0,
\end{aligned}
\end{equation}
since $\tilde{u}\left(x_{0},t_{0}\right)<\tilde{v}\left(x_{0},t_{0}\right)$. Combining \eqref{eq1-5}, \eqref{eq1-6}, \eqref{eq1-7}, we obtain the following:
\[
\begin{aligned}
&\tilde{u}\left(x_{0},t_{0}\right)-\tilde{v}\left(x_{0},t_{0}\right)-\left[\Delta_{\omega}\tilde{u}\left(x_{0},t_{0}\right)-\Delta_{\omega}\tilde{v}\left(x_{0},t_{0}\right)\right]\\
&+2L\left[\tilde{u}\left(x_{0},t_{0}\right)-\tilde{v}\left(x_{0},t_{0}\right)\right]-e^{-2Lt_{0}}\left[f\left(u\left(x_{0},t_{0}\right)\right)-f\left(v\left(x_{0},t_{0}\right)\right)\right]<0,
\end{aligned}
\]
which contradicts \eqref{eq1-4}. Therefore $\tilde{u}\left(x,t\right)\geq\tilde{v}\left(x,t\right)$ for all $(x,t)\in S\times(0,T']$ so that we get $u\left(x,t\right)\geq v\left(x,t\right)$ for all $(x,t)\in\overline{S}\times[0,T)$, since $T'<T$ is arbitrarily given.
\end{proof}

\begin{Thm}[Strong Comparison Principle]\label{SCP}
Let $T>0$ $(T\, may\,be +\infty)$ and $f$ be locally Lipschitz continuous on $\mathbb{R}$. Suppose that real-valued functions $u(x,\cdot)$, $v(x,\cdot) \in C[0, T]$ are differentiable in $(0, T)$ for each $x \in \overline{S}$ and satisfy
\begin{equation}\label{eq1-8}
\begin{cases}
 u_{t}(x,t)-\Delta_{\omega}u(x,t)-f(u(x,t)) & \\
 \geq v_{t}(x,t)-\Delta_{\omega}v(x,t)-f(v(x,t)), & (x,t)\in S\times\left(0,T\right),\\
 u(x,t)\geq v(x,t), & (x,t)\in\partial S\times[0,T),\\
 u\left(x,0\right)\geq v\left(x,0\right), & x\in\overline{S}.
\end{cases}
\end{equation}
If $u\left(x^{*},0\right)>v\left(x^{*},0\right)$ for some $x^{*}\in S$, then $u\left(x,t\right)>v\left(x,t\right)$ for all $(x,t)\in S\times(0,T).$
\end{Thm}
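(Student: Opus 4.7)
The plan is to reduce the statement to a strong minimum principle for $w := u - v$. Theorem \ref{comparison principle} applied under the given hypotheses immediately yields $w \geq 0$ on $\overline{S} \times [0, T)$, so what remains is to exclude $w(x_{1}, t_{1}) = 0$ at any $(x_{1}, t_{1}) \in S \times (0, T)$, given that $w(x^{*}, 0) > 0$ at some interior vertex $x^{*}$.

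Suppose, for contradiction, that such an $(x_{1}, t_{1})$ exists, and fix any $T' \in (t_{1}, T)$. By the local Lipschitz continuity of $f$, there is $L > 0$ with $|f(u(x,t)) - f(v(x,t))| \leq L\,|u(x,t) - v(x,t)|$ on $\overline{S} \times [0, T']$. Set $D := \max_{x \in S} d_{\omega}x$ and $c := L + D$, and define $\tilde{w}(x,t) := e^{ct} w(x,t) \geq 0$. Combining the differential inequality $w_{t} \geq \Delta_{\omega} w + [f(u) - f(v)] \geq \Delta_{\omega} w - L w$ on $S \times (0, T']$ with the expansion
\[
\Delta_{\omega} \tilde{w}(x,t) = -d_{\omega}x \cdot \tilde{w}(x,t) + \sum_{y \in \overline{S}} \omega(x,y)\, \tilde{w}(y, t)
\]
and the choice $c = L + D$, I arrive at the one-sided evolution inequality
\[
\tilde{w}_{t}(x,t) \geq (D - d_{\omega}x)\,\tilde{w}(x,t) + \sum_{y \in \overline{S}} \omega(x,y)\,\tilde{w}(y, t) \geq 0, \qquad (x,t) \in S \times (0, T'],
\]
where, crucially, both terms on the right are non-negative thanks to $D \geq d_{\omega}x$ and $\tilde{w} \geq 0$.

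This inequality shows $\tilde{w}(x_{1}, \cdot)$ is non-decreasing on $[0, T']$. Together with $\tilde{w}(x_{1}, 0) \geq 0$ and $\tilde{w}(x_{1}, t_{1}) = 0$, this forces $\tilde{w}(x_{1}, t) \equiv 0$ on $[0, t_{1}]$, so $\tilde{w}_{t}(x_{1}, t) = 0$ on $(0, t_{1})$. Re-reading the evolution inequality at $x = x_{1}$ then yields $\sum_{y \in \overline{S}} \omega(x_{1}, y)\, \tilde{w}(y, t) = 0$ for every $t \in (0, t_{1})$; each summand being non-negative and continuous in $t$, it follows that $\tilde{w}(y, t) \equiv 0$ on $[0, t_{1}]$ for every $y \in S$ adjacent to $x_{1}$ (contributions from $y \in \partial S$ vanish automatically by the boundary condition). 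Iterating this procedure along edges in $S$ and invoking the standing assumption that $S$ is connected, I reach in finitely many steps every vertex of $S$; in particular $\tilde{w}(x^{*}, 0) = 0$, so $w(x^{*}, 0) = 0$, contradicting the hypothesis $u(x^{*}, 0) > v(x^{*}, 0)$.

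The main delicate point I expect is engineering the exponential rescaling so that every coefficient on the right of the evolution inequality for $\tilde{w}$ is non-negative; this requires the shift by $D$ in addition to $L$, and is the sole place where the graph structure (via the degree bound) enters. Once that inequality is in hand, the strong comparison principle in the discrete setting reduces to a finite propagation along the connected graph $S$, with no Hopf-type barrier construction needed.
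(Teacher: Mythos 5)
Your proof is correct, but it follows a genuinely different route from the paper's. The paper first integrates the scalar differential inequality $\tau_{t}(x^{*},t)\geq-(d_{\omega}x^{*}+L)\tau(x^{*},t)$ at the distinguished vertex to get the positive lower bound $\tau(x^{*},t)\geq\tau(x^{*},0)e^{-(d_{\omega}x^{*}+L)t}>0$ for all $t$, and then, assuming an interior zero, takes a minimum point $(x_{0},t_{0})$, deduces $\Delta_{\omega}\tau(x_{0},t_{0})=0$ from the sign conditions at the minimum, and propagates the zero set along edges \emph{within the fixed time slice} $t=t_{0}$ until it collides with the lower bound at $x^{*}$. You instead rescale by the \emph{growing} exponential $e^{(L+D)t}$ with the degree bound $D$ folded in, which makes $\tilde{w}$ nondecreasing in time at every interior vertex; a zero at $(x_{1},t_{1})$ then forces $\tilde{w}$ to vanish on the whole time segment $\{x_{1}\}\times[0,t_{1}]$, and you propagate spatially carrying the full interval along, landing at $t=0$ where you contradict the hypothesis directly. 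Your version buys a cleaner contradiction (no separate ODE estimate at $x^{*}$ is needed, and no minimum-point selection), at the cost of the slightly more delicate bookkeeping of the constant $c=L+D$; both arguments ultimately rest on the same finite propagation along the connected graph. One small misstatement: your parenthetical that ``contributions from $y\in\partial S$ vanish automatically by the boundary condition'' is not right, since the boundary hypothesis is $u\geq v$ on $\partial S$, not $u=v$; but this is harmless, because the identity $\sum_{y\in\overline{S}}\omega(x_{1},y)\tilde{w}(y,t)=0$ with all summands nonnegative already gives $\tilde{w}(y,t)=0$ at every neighbour, and you only need the neighbours lying in $S$ (which is connected by the paper's standing assumption) to carry the propagation to $x^{*}$.
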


\begin{proof}
First, note that $u\geq v$ on $\overline{S}\times[0,T)$ by above theorem. Let $T'>0$ be arbitrarily given with $T'<T$. Since $f$ is locally Lipschitz continuous on $\mathbb{R}$, there exists $L>0$ such that
\begin{equation}\label{eq1-9}
\left|f\left(a\right)-f\left(b\right)\right|\leq L\left|a-b\right|,\,\,a,b\in[-m,m]
\end{equation}
where $m={\displaystyle \max_{x\in\overline{S}}\max_{0\leq t\leq T'}}\left\{\left|u\left(x,t\right)\right|,\left|v\left(x,t\right)\right|\right\}.$
Let $\tau\,:\,\overline{S}\times\left[0,T'\right]\rightarrow\mathbb{R}$ be the functions defined by
\[
\tau\left(x,t\right) := u\left(x,t\right)-v\left(x,t\right),\,\,\left(x,t\right)\in\overline{S}\times\left[0,T'\right].
\]

Then $\tau\left(x,t\right)\geq0$ for all $\left(x,t\right)\in\overline{S}\times\left[0,T'\right]$. From the inequality \eqref{eq1-8}, we have
\begin{equation}\label{equation0}
\tau_{t}(x^{*},t)-\Delta_{\omega}\tau\left(x^{*},t\right)-\left[f\left(u(x^{*},t)\right)-f\left(v(x^{*},t)\right)\right]\geq0.
\end{equation}
for all $0<t\leq T'$. Using \eqref{eq1-9}, the inequality \eqref{equation0} becomes
\begin{equation*}
\begin{aligned}
  & \tau_{t}\left(x^{*},t\right)\\
  & \geq \sum_{y\in\overline{S}}\left[\tau\left(y,t\right)-\tau\left(x^{*},t\right)\right]\omega\left(x^{*},y\right)+\left[f\left(u\left(x^{*},t\right)\right)-f\left(v\left(x^{*},t\right)\right)\right]\\
  & \geq-d_{\omega}x^{*}\tau\left(x^{*},t\right)-L|\tau\left(x^{*},t\right)|\\
  &=-\left(d_{\omega}x^{*}+L\right)\tau\left(x^{*},t\right).
\end{aligned}
\end{equation*}

This implies
\begin{equation}\label{equation1}
\tau\left(x^{*},t\right)\geq\tau\left(x^{*},0\right)e^{-\left(dx^{*}+L\right)t}>0,\,\,t\in(0,T'],
\end{equation}
since $\tau\left(x^{*},0\right)>0$. Now, suppose there exists $(x_{0},t_{0})\in S\times (0,T']$ such that
\begin{center}
$\tau\left(x_{0},t_{0}\right)=\displaystyle \min_{x\in S,\,0<t\leq T'} \tau\left(x,t\right)=0$.
\end{center}

Then we have
\[
\tau_{t}\left(x_{0},t_{0}\right)\leq0 \hbox{ and } \Delta_{\omega}\tau\left(x_{0},t_{0}\right)\geq0.
\]

Hence, together with inequality \eqref{equation0}, we obtain the following.
\begin{center}
$0\leq\tau_{t}\left(x_{0},t_{0}\right)-\Delta_{\omega}\tau\left(x_{0},t_{0}\right)\leq0$.
\end{center}

Therefore, we have
\[
\Delta_{\omega}\tau\left(x_{0},t_{0}\right)=0
\]
i.e.
$$
\begin{aligned}
&\sum_{y\in\overline{S}}\tau\left(y,t_{0}\right)\omega\left(x_{0},y\right)=0,
\end{aligned}
$$
which implies that $\tau\left(y,t_{0}\right)=0$ for all $y\in \overline{S}$ with $y\sim x_{0}$.
Now, for any $x \in \overline S,$ there exists a path
\begin{displaymath}
x_{0} \sim x_{1} \sim \cdots \sim x_{n-1} \sim x_{n} = x,
\end{displaymath}
since $\overline S$ is connected. By applying the same argument as above inductively we see that $\tau(x, t_{0})=0$ for every $x \in \overline S$. This is a contradiction to \eqref{equation1}. Since $T'<T$ is arbitrarily given, we get $u\left(x,t\right)>v\left(x,t\right)$ for all $\left(x,t\right)\in S\times\left(0,T\right)$.
\end{proof}

We note that by the comparison principle, if $f(0)=0$ then solutions $u$ to the equation \eqref{equation} are positive if initial data $u_{0}$ is nontrivial and nonnegative. On the other hand, it is quite natural that $f$ is positive, when dealing with the blow-up theory. Hence, throughout this paper, we always assume that a function $f$ is locally Lipschitz continuous on $\mathbb{R}$, $f(0)=0$, $f(u)>0$, $u>0$ and the initial data $u_{0}$ is nontrivial and nonnegative.

\section{Blow-Up: Concavity Method}\label{BCM}
In this section, we discuss the blow-up phenomena of the solutions to the equation \eqref{equation} by using concavity method, which is the main part of this paper. This method, introduced by Levine \cite{L}, uses the concavity of an auxiliary function. In fact, the concavity method is an elegant tool for deriving estimates and giving criteria for blow-up.

In order to state and prove our result, we introduce the following new condition: for some $\alpha, \beta, \gamma>0$,
\[
\hbox{(C) $\hspace{1cm} \alpha F\left(u\right) \leq uf(u)+\beta u^{2}+\gamma,\,\,u>0$},
\]
where $0<\beta\leq\frac{\left(\alpha-2\right)\lambda_{0}}{2}$.

\begin{Rmk}

We will discuss the condition (C) in the next section, comparing with the conditions (A) and (B) introduced in the first section, together with the condition $J(0)>0$ for the initial data.
\end{Rmk}

We now state the main theorem of this paper:

\begin{Thm}\label{BlowB}
For the function $f$ with the hypothesis (C), if the initial data $u_{0}$ satisfies
\begin{equation}\label{11}
-\frac{1}{4}\sum_{x,y\in\overline{S}}\left[u_{0}\left(x\right)-u_{0}\left(y\right)\right]^{2}\omega\left(x,y\right)+\sum_{x\in\overline{S}}\left[F(u_{0}\left(x\right))-\gamma\right]>0,
\end{equation}
then the solutions $u$ to the equation \eqref{equation} blow up at finite time $T^{*}$ in the sense of
\[
\lim_{t\rightarrow T^{*}}\int_{0}^{t}\sum_{x\in\overline{S}}u^{2}\left(x,s\right)ds=+\infty,
\]
where $\gamma$ is the constant in the condition (C). \end{Thm}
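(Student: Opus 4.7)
My plan is to apply Levine's concavity method, adapted to the discrete setting. The central auxiliary functional will be
\[
\Phi(t) := \int_0^t \sum_{x\in\overline{S}} u^2(x,s)\,ds + (T-t)\sum_{x\in\overline{S}} u_0^2(x),
\]
for a fixed $T>0$, together with the energy
\[
J(t) := -\tfrac{1}{4}\sum_{x,y\in\overline{S}}\bigl[u(x,t)-u(y,t)\bigr]^2\omega(x,y) + \sum_{x\in\overline{S}}\bigl[F(u(x,t))-\gamma\bigr],
\]
where $F(u)=\int_0^u f(s)\,ds$. Hypothesis \eqref{11} says exactly $J(0)>0$.

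The first step is to set up the derivatives. Applying Lemma \ref{Lemma 2.1.} with $g=u_t$ and using the equation $u_t=\Delta_\omega u+f(u)$, I get $J'(t)=\sum_{x\in\overline{S}} u_t^2(x,t)\geq 0$, so $J(t)\geq J(0)>0$ throughout the existence interval. Direct differentiation gives $\Phi'(t)=\sum u^2(x,t)-\sum u_0^2(x)=2\int_0^t\sum u u_s\,ds$, and hence $\Phi'(0)=0$, while Lemma \ref{Lemma 2.1.} (with $g=u$) turns $\Phi''(t)=2\sum u u_t$ into
\[
\Phi''(t)=-\sum_{x,y\in\overline{S}}\bigl[u(x,t)-u(y,t)\bigr]^2\omega(x,y)+2\sum_{x\in\overline{S}} uf(u).
\]

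The heart of the proof is to insert hypothesis (C), written as $uf(u)\geq\alpha F(u)-\beta u^2-\gamma$, and eliminate $\sum F(u)$ via the identity $\sum F(u)=J(t)+\tfrac{1}{4}\sum_{x,y}[u(x,t)-u(y,t)]^2\omega(x,y)+\gamma|\overline{S}|$. After this substitution the Dirichlet-energy term appears with coefficient $2\alpha-4$; bounding it below via Lemma \ref{Lemma 2.2.} (which applies because $u(\cdot,t)\in\mathcal{A}$) in the form $\tfrac{1}{2}\sum_{x,y}[u(x,t)-u(y,t)]^2\omega\geq\lambda_0\sum u^2$ produces exactly the $\sum u^2$-coefficient $(\alpha-2)\lambda_0-2\beta$. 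The constraint $\beta\leq(\alpha-2)\lambda_0/2$ is precisely what makes this coefficient nonnegative; discarding this nonnegative term together with the nonnegative leftover constant $2(\alpha-1)\gamma|\overline{S}|$ yields
\[
\Phi''(t)\geq 2\alpha J(t)\geq 2\alpha J(0)>0.
\]

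The closing move is the standard Levine argument. Cauchy--Schwarz gives $[\Phi'(t)]^2=4\bigl[\int_0^t\sum u u_s\,ds\bigr]^2\leq 4\Phi(t)\int_0^t\sum u_s^2\,ds=4\Phi(t)[J(t)-J(0)]$, and combining this with $\Phi''\geq 2\alpha J$ produces
\[
\Phi(t)\Phi''(t)-\tfrac{\alpha}{2}\bigl[\Phi'(t)\bigr]^2\geq 2\alpha J(0)\Phi(t)>0.
\]
Setting $\sigma:=\alpha/2-1>0$, so that $\sigma+1=\alpha/2$, this is equivalent to $(\Phi^{-\sigma})''\leq -2\alpha\sigma J(0)\Phi^{-\sigma-1}<0$. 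Since $\Phi''\geq 2\alpha J(0)>0$ and $\Phi'(0)=0$ force $\Phi'(t)\geq 2\alpha J(0)t>0$ for $t>0$, the function $\Phi^{-\sigma}$ is positive, strictly decreasing, and strictly concave on $(0,\infty)$, so it must reach zero at some finite $T^{*}$; at that time $\Phi(t)$, and therefore $\int_0^t\sum u^2\,ds$, blows up. The main obstacle I anticipate is the bookkeeping in the middle step---tracking the $\gamma|\overline{S}|$ constants and checking that the Poincaré-type absorption of $\sum u^2$ matches the $\beta u^2$ term in (C) exactly; everything before and after that is routine.
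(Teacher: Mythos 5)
Your derivation of the core differential inequality is correct and coincides with the paper's: the identity $J'(t)=\sum_{x\in\overline{S}}u_t^2$, the computation of $\frac{d}{dt}\sum u^2$ via Lemma \ref{Lemma 2.1.}, the substitution of (C), the elimination of $\sum F(u)$ through $J(t)$, and the absorption of $-2\beta\sum u^2$ by the Poincar\'e inequality of Lemma \ref{Lemma 2.2.} (using $\beta\le(\alpha-2)\lambda_0/2$, which also forces $\alpha>2$) all check out and reproduce the paper's estimate $\tfrac12 I''\ge\alpha J(t)\ge\alpha J(0)$. Where you diverge is the choice of auxiliary function: the paper takes $I(t)=\int_0^t\sum u^2\,ds+M$ with a large constant $M$, pays for the nonzero $I'(0)=\sum u_0^2$ with a $\delta$-weighted Schwarz inequality, and then closes via the ODE inequality $I'\ge\bigl[I'(0)/I^{\xi+1}(0)\bigr]I^{\xi+1}$, which works precisely because $I'(0)>0$. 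You instead use the Payne--Schaefer functional $\Phi(t)=\int_0^t\sum u^2\,ds+(T-t)\sum u_0^2$, which elegantly kills the cross term in Cauchy--Schwarz but makes $\Phi'(0)=0$.

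That last fact is where your proof has a genuine gap. Your concavity inequality $\Phi\Phi''-(1+\sigma)(\Phi')^2\ge 2\alpha J(0)\Phi$ uses $\int_0^t\sum u^2\,ds\le\Phi(t)$, which holds only for $t\le T$; so $\Phi^{-\sigma}$ is concave only on $[0,T]$, not on $(0,\infty)$ as you assert. Since $(\Phi^{-\sigma})'(0)=-\sigma\Phi(0)^{-\sigma-1}\Phi'(0)=0$, the standard tangent-line argument from $t=0$ gives no contradiction, and a concave decreasing function on a \emph{bounded} interval need not vanish there: you must show that $\Phi^{-\sigma}$ hits zero \emph{before} $T$, while $\Phi$ itself (hence the slope of the tangent line you would use) depends on $T$. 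This circularity is a known feature of the $(T-t)$ device and is exactly why the paper introduces $M$ instead. Your argument can be repaired --- e.g., integrate $(\Phi^{-\sigma})''\le-2\alpha\sigma J(0)\Phi^{-\sigma-1}$ against $(\Phi^{-\sigma})'$ to get a quantitative vanishing time of order $\|u_0\|_{\ell^2}\sqrt{T}$ and then choose $T$ large, or add a term $\beta(t+\tau)^2$ to make $\Phi'(0)>0$, or simply switch to the paper's $I(t)$ --- but as written the final deduction ``it must reach zero at some finite $T^*$'' does not follow.
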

\begin{proof}
First, we note that $u(x,t)>0$ on $S\times(0,\infty)$, by the strong comparison principle. Now, we define a functional $J$ by
\[
J\left(t\right):=-\frac{1}{4}\sum_{x,y\in\overline{S}}\left[u\left(x,t\right)-u\left(y,t\right)\right]^{2}\omega\left(x,y\right)+\sum_{x\in\overline{S}}\left[F\left(u\left(x,t\right)\right)-\gamma\right],\;\;\;t\geq0.
\]

Then by \eqref{11},
\[
J\left(0\right)=-\frac{1}{4}\sum_{x,y\in\overline{S}}\left[u_{0}\left(x\right)-u_{0}\left(y\right)\right]^{2}\omega\left(x,y\right)+\sum_{x\in\overline{S}}\left[F\left(u_{0}\left(x\right)\right)-\gamma\right]>0.
\]

Multiplying the equation \eqref{equation}  by $u$ and summing up over $\overline{S}$, we obtain from Lemma \ref{Lemma 2.1.} that
\begin{equation}\label{12}
\frac{d}{dt}\sum_{x\in\overline{S}}u^{2}\left(x,t\right)=-\sum_{x,y\in\overline{S}}\left[u\left(x,t\right)-u\left(y,t\right)\right]^{2}\omega\left(x,y\right)+2\sum_{x\in\overline{S}}u\left(x,t\right)f\left(u\left(x,t\right)\right).
\end{equation}

Multiplying the equation \eqref{equation} by $u_{t}$ and summing up over $\overline{S}$, we obtain from Lemma \ref{Lemma 2.1.} that
\[
\sum_{x\in\overline{S}}u_{t}^{2}\left(x,t\right)=-\sum_{x\in\overline{S}}u_{t}\left(x,t\right)\left[-\Delta_{\omega}u\left(x,t\right)\right]+\frac{d}{dt}\left[\sum_{x\in\overline{S}}F\left(u\left(x,t\right)\right)\right]
\]
and
\[
\begin{aligned}
&\sum_{x\in\overline{S}}u_{t}\left(x,t\right)\left(-\Delta_{\omega}u\left(x,t\right)\right)\\
&=\frac{1}{2}\sum_{x,y\in\overline{S}}\left[u\left(y,t\right)-u\left(x,t\right)\right]\left[u_{t}\left(y,t\right)-u_{t}\left(x,t\right)\right]\omega\left(x,y\right)\\
&=\frac{d}{dt}\left[\frac{1}{4}\sum_{x,y\in\overline{S}}\left[u\left(y,t\right)-u\left(x,t\right)\right]^{2}\omega\left(x,y\right)\right].
\end{aligned}
\]

Then it follows that
\begin{equation}\label{13}
\begin{aligned}
\sum_{x\in\overline{S}}u_{t}^{2}\left(x,t\right)=&-\frac{d}{dt}\left[\frac{1}{4}\sum_{x,y\in\overline{S}}\left[u\left(y,t\right)-u\left(x,t\right)\right]^{2}\omega\left(x,y\right)\right]\\
+&\frac{d}{dt}\left[\sum_{x\in\overline{S}}F\left(u\left(x,t\right)\right)\right].
\end{aligned}
\end{equation}

Moreover, it follows from (\ref{13}) that
\[
J'\left(t\right)=\sum_{x\in\overline{S}}u_{t}^{2}\left(x,t\right)
\]
and
\begin{equation}\label{jtt}
J\left(t\right)=\int_{0}^{t}J'\left(s\right)ds+J\left(0\right)=\int_{0}^{t}\sum_{x\in\overline{S}}u_{t}^{2}\left(x,s\right)ds+J\left(0\right).
\end{equation}

Now, we introduce a new function
\[
I\left(t\right)=\int_{0}^{t}\sum_{x\in\overline{S}}u^{2}\left(x,s\right)ds+M,\,t\geq 0,
\]
where $M>0$ is a constant to be determined later. Then it is easy to see that
\begin{equation}\label{15}
\begin{aligned}
I'\left(t\right) & =  \sum_{x\in\overline{S}}u^{2}\left(x,t\right)\\
 & =  \sum_{x\in\overline{S}}\int_{0}^{t}2u\left(x,s\right)u_{t}\left(x,s\right)ds+\sum_{x\in\overline{S}}u_{0}^{2}\left(x\right).
\end{aligned}
\end{equation}

Then we use \eqref{12}, the condition (C), Lemma \ref{Lemma 2.2.}, and \eqref{jtt} in turn to obtain
\begin{equation}\label{14}
\begin{aligned}
\frac{1}{2}I''\left(t\right)&= \frac{1}{2}\frac{d}{dt}\sum_{x\in\overline{S}}u^{2}\left(x,t\right)\\
&=-\frac{1}{2}\sum_{x,y\in\overline{S}}\left[u\left(x,t\right)-u\left(y,t\right)\right]^{2}\omega\left(x,y\right)+\sum_{x\in\overline{S}}u\left(x,t\right)f\left(u\left(x,t\right)\right)\\
&\geq \left[\frac{\alpha-2}{4}-\frac{\alpha}{4}\right]\sum_{x,y\in\overline{S}}\left[u\left(x,t\right)-u\left(y,t\right)\right]^{2}\omega\left(x,y\right) \\
&+\sum_{x\in\overline{S}}\left[\alpha F\left(u\left(x,t\right)\right) - \frac{(\alpha-2)\lambda_{0}}{2}u^{2}\left(x,t\right)-\alpha\gamma\right]\\
&=\alpha\left[-\frac{1}{4}\sum_{x,y\in\overline{S}}\left[u\left(x,t\right)-u\left(y,t\right)\right]^{2}\omega\left(x,y\right)+\sum_{x\in\overline{S}}\left[F\left(u\left(x,t\right)\right)-\gamma\right]\right]\\
&-\frac{(\alpha-2)\lambda_{0}}{2}\sum_{x\in\overline{S}}u^{2}\left(x,t\right)+\frac{\alpha-2}{4}\sum_{x,y\in\overline{S}}\left[u\left(x,t\right)-u\left(y,t\right)\right]^{2}\omega\left(x,y\right)\\
&\geq \alpha\left[J\left(0\right)+\sum_{x\in\overline{S}}\int_{0}^{t}u_{t}^{2}\left(x,s\right)ds\right].
\end{aligned}
\end{equation}

Using the Schwarz inequality, we obtain
\begin{equation}\label{16}
\begin{aligned}
&\left\{ I'\left(t\right)\right\} ^{2}\\
& \leq 4\left(1+\delta\right)\left[\sum_{x\in\overline{S}}\int_{0}^{t}u\left(x,s\right)u_{t}\left(x,s\right)ds\right]^{2}+\left(1+\frac{1}{\delta}\right)\left[\sum_{x\in\overline{S}}u_{0}^{2}\left(x\right)\right]^{2}\\
 & \leq  4\left(1+\delta\right)\left[\sum_{x\in\overline{S}}\left(\int_{0}^{t}u^{2}\left(x,s\right)ds\right)^{\frac{1}{2}}\left(\int_{0}^{t}u_{t}^{2}\left(x,s\right)ds\right)^{\frac{1}{2}}\right]^{2}\\
 &+\left(1+\frac{1}{\delta}\right)\left[\sum_{x\in\overline{S}}u_{0}^{2}\left(x\right)\right]^{2}\\
 & \leq  4\left(1+\delta\right)\left(\sum_{x\in\overline{S}}\int_{0}^{t}u^{2}\left(x,s\right)ds\right)\left(\sum_{x\in\overline{S}}\int_{0}^{t}u_{t}^{2}\left(x,s\right)ds\right)\\
 &+\left(1+\frac{1}{\delta}\right)\left[\sum_{x\in\overline{S}}u_{0}^{2}\left(x\right)\right]^{2},
\end{aligned}
\end{equation}
where $\delta>0$ is arbitrary. Combining the above estimates (\ref{15}), (\ref{14}), and (\ref{16}), we obtain that for $\xi=\delta=\sqrt{\frac{\alpha}{2}}-1>0$,
\begin{equation*}
\begin{aligned}
&I''\left(t\right)I\left(t\right)-\left(1+\xi\right)I'\left(t\right)^{2}\\
 & \geq 2\alpha\left[J\left(0\right)+\sum_{x\in\overline{S}}\int_{0}^{t}u_{t}^{2}\left(x,s\right)ds\right]\left[\sum_{x\in\overline{S}}\int_{0}^{t}u^{2}\left(x,s\right)ds+M\right]\\
 & -  4\left(1+\xi\right)\left(1+\delta\right)\left[\sum_{x\in\overline{S}}\int_{0}^{t}u^{2}\left(x,s\right)ds\right]\left[\sum_{x\in\overline{S}}\int_{0}^{t}u_{t}^{2}\left(x,s\right)ds\right]\\
 & -  \left(1+\xi\right)\left(1+\frac{1}{\delta}\right)\left[\sum_{x\in\overline{S}}u_{0}^{2}\left(x\right)\right]^{2}\\
 & \geq  2\alpha M\cdot J\left(0\right)-\left(1+\xi\right)\left(1+\frac{1}{\delta}\right)\left[\sum_{x\in\overline{S}}u_{0}^{2}\left(x\right)\right]^{2}.
\end{aligned}
\end{equation*}

Since $J\left(0\right)>0$ by assumption, we can choose $M>0$ to be large enough so that
\begin{equation}\label{17}
I''(t)I\left(t\right)-\left(1+\xi\right)I'\left(t\right)^{2}>0.
\end{equation}

This inequality (\ref{17}) implies that for $t\geq0$,
\[
\frac{d}{dt}\left[\frac{I'\left(t\right)}{I^{\xi+1}\left(t\right)}\right]>0  \hbox{  i.e. }I'\left(t\right)\geq\left[\frac{I'\left(0\right)}{I^{\xi+1}\left(0\right)}\right]I^{\xi+1}\left(t\right).
\]

Therefore, it follows that $I\left(t\right)$ cannot remain finite for all $t>0$. In other words, the solutions $u\left(x,t\right)$  blow up in finite time $T^{*}$.
\end{proof}

\begin{Rmk}
The above blow-up time can be estimated roughly. Taking
\[
M:=\frac{\frac{\alpha}{\alpha-2}\left(1+\sqrt{1+\frac{\alpha-2}{2}}\right)\left[{\sum_{x\in\overline{S}}u_{0}^{2}\left(x\right)}\right]^{2}}{2\alpha\left[-\frac{1}{4}{ \sum_{x,y\in\overline{S}}\left[u_{0}\left(x\right)-u_{0}\left(y\right)\right]^{2}\omega\left(x,y\right)+\sum_{x\in\overline{S}}}\left[F\left(u_{0}\left(x\right)\right)-\gamma\right]\right]},
\]
we see that
\[
\begin{cases}
I'\left(t\right)\geq\left[\frac{\sum_{x\in\overline{S}}u_{0}^{2}\left(x\right)}{M^{\xi+1}}\right]I^{\xi+1}\left(t\right), & t>0,\\
I\left(0\right)=M,
\end{cases}
\]
which implies
\[
I\left(t\right)\geq\left[\frac{1}{M^{\xi}}-\frac{\xi\sum_{x\in\overline{S}}u_{0}^{2}\left(x\right)}{M^{\xi+1}}t\right]^{-\frac{1}{\xi}}
\]
where $\xi=\sqrt{\frac{\alpha}{2}}-1>0$. Then the blow-up time $T^{*}$ satisfies
\begin{equation*}\label{22}
0<T^{*}\leq\frac{M}{\xi\sum_{x\in\overline{S}}u_{0}^{2}\left(x\right)}.
\end{equation*}
\end{Rmk}

\section{Discussion on the Condition (C) and $J(0)>0$}\label{(C)}

As seen in the proof of Theorem \ref{BlowB}, the concavity method is a tool for deriving the blow-up solution via the auxiliary function $J(t)$ under the condition (A), (B), or (C), by imposing $J(0)>0$, instead of the large initial data.

In this section, we compare the conditions (A), (B), and (C) each other and discuss the role of $J(0)>0$.

First, let us recall the conditions as follows: for some $\epsilon, \beta, \hbox{ and } \gamma>0$,
\[
\begin{aligned}
&\hbox{(A) $\hspace{1cm} (2+\epsilon) F(u)\leq uf(u)$},\\
&\hbox{(B) $\hspace{1cm} (2+\epsilon) F(u)\leq uf(u) + \gamma$},\\
&\hbox{(C) $\hspace{1cm} (2+\epsilon) F\left(u\right) \leq uf(u)+\beta u^{2}+\gamma$},
\end{aligned}
\]
for every $u>0$, where $0<\beta\leq\frac{\epsilon\lambda_{0}}{2}$ and $F(u):=\int_{0}^{u}f(s)ds$. Here, note that the constants $\epsilon, \beta, \hbox{ and } \gamma>0$ may be different in each case.\\

Then it is easy to see that (A) implies (B) and (B) implies (C), in turn. The difference between (B) and (C) is whether or not they depend on the domain. The condition (B) is independent of the eigenvalue which depends on the domain. However, the condition (C) depends on domain, due to the term $au^{2}$ with $0<a\leq\frac{\lambda_{0}}{2}$. From this point of view, the condition (C) can be understood as a refinement of (B), corresponding to the domain. On the contrary, if a function $f$ satisfies (C) for every domain graph $S$ with boundary, then the eigenvalue $\lambda_{0}$ can be arbitrary small so that the condition (C) get closer to (B) arbitrarily. Besides, as far as the authors know, there has not been any noteworthy condition for the concavity method other than (A) or (B). \\

On the other hand, using the fact that (C) is equivalent to

\begin{equation*}\label{F(u)/u^2+m}
	\frac{d}{du}\left(\frac{F(u)}{u^{2+\epsilon}}-\frac{\gamma}{2+\epsilon}\cdot\frac{1}{u^{2+\epsilon}}-\frac{\beta}{\epsilon}\cdot\frac{1}{u^{\epsilon}}\right)\geq0,\,\,u>0,
	\end{equation*}

we can easily see that for every $u>0$,

\begin{equation}\label{conditionsss}
\begin{aligned}
&\hbox{(A) holds if and only if $F(u)={u^{2+\epsilon}}h_{1}(u)$},\\
&\hbox{(B) holds if and only if $F(u)={u^{2+\epsilon}}h_{2}(u)+b$},\\
&\hbox{(C) holds if and only if $F(u)={u^{2+\epsilon}}h_{3}(u)+au^{2}+b$},
\end{aligned}
\end{equation}
for some constants $\epsilon>0$, $a>0$, and $b>0$ with $0<a\leq\frac{\lambda_{0}}{2}$, where $h_{1}$, $h_{2}$, and $h_{3}$ are nondecreasing function on $(0,+\infty)$. Here also, the constants $\epsilon, a, \hbox{ and } b$ may be different in each case. We note here that the nondecreasing functions $h_{1}$ is nonnegative on $(0,+\infty)$, but  $h_{2}$ and $h_{3}$ may not be nonnegative, in general.


\begin{Lemma}\label{C condition f>=a u^1+e}
	Let $f$ be a function satisfying (C) and $f(u)\geq \lambda u$, $u>0$, where $\lambda>\lambda_{0}$. Then the condition (C) implies that there exists $m>0$ such that $h_{3}(u)>0$ for $u>m$. In this case, we can find $\delta>0$  such that $f(u)\geq \delta u^{1+\epsilon}$, $u\geq m$. Moreover, the conditions (B) and (C) are equivalent.
\end{Lemma}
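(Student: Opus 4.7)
The plan is to combine the integrated consequence of the hypothesis $f(u) \geq \lambda u$ with the equivalent formulation of (C) furnished by \eqref{conditionsss}, namely $F(u) = u^{2+\epsilon}h_{3}(u) + a u^{2} + b$ for a nondecreasing $h_{3}$ and constants $0 < a \leq \lambda_{0}/2$, $b > 0$. Integrating $f(u) \geq \lambda u$ gives $F(u) \geq \tfrac{\lambda}{2} u^{2}$, and since $\lambda > \lambda_{0} \geq 2a$, the difference $\tfrac{\lambda}{2} - a$ is strictly positive; this single inequality drives all three conclusions.

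For the positivity of $h_{3}$, I would rearrange the decomposition as
\[
u^{2+\epsilon} h_{3}(u) \;\geq\; \left(\tfrac{\lambda}{2} - a\right) u^{2} - b,
\]
and take $m_{1} := \sqrt{b/(\lambda/2 - a)}$; the right-hand side is strictly positive for every $u > m_{1}$, forcing $h_{3}(u) > 0$ there.

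For the pointwise growth of $f$, I would fix any $u_{0} > m_{1}$; the monotonicity of $h_{3}$ then gives $h_{3}(u) \geq h_{3}(u_{0}) > 0$ for all $u \geq u_{0}$. Plugging this bound into the rearranged (C), $u f(u) \geq (2+\epsilon) F(u) - \beta u^{2} - \gamma$, yields
\[
u f(u) \;\geq\; (2+\epsilon)\, h_{3}(u_{0})\, u^{2+\epsilon} + \bigl((2+\epsilon) a - \beta\bigr) u^{2} + (2+\epsilon) b - \gamma.
\]
For all $u$ past some larger threshold $m_{2}$ the leading $u^{2+\epsilon}$ term swallows the lower-order contributions, giving $f(u) \geq \delta u^{1+\epsilon}$ with, e.g., $\delta := \tfrac{1}{2}(2+\epsilon)\, h_{3}(u_{0})$. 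Replacing $m$ by $\max(m_{1}, m_{2})$ validates both claims simultaneously.

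Finally, for the equivalence of (B) and (C), the direction (B) $\Rightarrow$ (C) is immediate since $\beta u^{2} \geq 0$ and any $\beta \in (0, \epsilon\lambda_{0}/2]$ may be inserted. For the reverse direction, I would use $F(u) \geq \tfrac{\lambda}{2} u^{2}$ to bound $\beta u^{2} \leq \tfrac{2\beta}{\lambda} F(u)$ and absorb the quadratic term into the $F$-side of (C), producing
\[
\left(2+\epsilon - \tfrac{2\beta}{\lambda}\right) F(u) \;\leq\; u f(u) + \gamma.
\]
The constraints $\beta \leq \epsilon\lambda_{0}/2$ and $\lambda > \lambda_{0}$ force $\tfrac{2\beta}{\lambda} \leq \tfrac{\epsilon\lambda_{0}}{\lambda} < \epsilon$, so the coefficient is $2 + \epsilon'$ with $\epsilon' := \epsilon - \tfrac{2\beta}{\lambda} > 0$, which is precisely (B) with $\gamma' := \gamma$. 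The only point requiring care is that the strict gap $\lambda > \lambda_{0}$, together with the built-in constraint $\beta \leq \epsilon\lambda_{0}/2$, supplies the positive slack needed both for $h_{3}$ to eventually become positive and for the effective exponent in (B) to stay strictly above $2$; beyond that, everything is elementary algebra, so no serious obstacle is anticipated.
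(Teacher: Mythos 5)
Your argument is correct and follows essentially the same route as the paper: integrating $f(u)\geq\lambda u$ to get $F(u)\geq\frac{\lambda}{2}u^{2}$, reading off the eventual positivity of $h_{3}$ from the decomposition in \eqref{conditionsss}, using the monotonicity of $h_{3}$ together with (C) to extract $f(u)\geq\delta u^{1+\epsilon}$, and absorbing $\beta u^{2}\leq\frac{2\beta}{\lambda}F(u)$ to recover (B) with a strictly positive reduced exponent. The only cosmetic differences are that you absorb the lower-order terms into the dominant $u^{2+\epsilon}$ term rather than reusing $f(u)\geq\lambda u$ and $u>1$ as the paper does, and that you perform the final absorption directly instead of via the paper's splitting $\epsilon=\epsilon_{1}+\epsilon_{2}$ with $\epsilon_{1}=\epsilon\lambda_{0}/\lambda$.
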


\begin{proof} First, it follows from \eqref{conditionsss} and the fact $\lambda>\lambda_{0}$ that $F(u)\geq \frac{\lambda}{2}u^{2}\geq \frac{\lambda_{0}}{2}u^{2}$ and so that
\[
u^{2+\epsilon}h_{3}(u)=F(u)-au^{2}-b\geq \frac{\lambda-\lambda_{0}}{2}u^{2}-b,
\]	
which goes to $+\infty$, as $u\rightarrow+\infty$. So, we can find $m>1$ such that $h_{3}(m)>0$, which implies that
\[
F(u)\geq u^{2+\epsilon}h_{3}(u),\,\, u\geq m.
\]
Putting it into the condition (C), we obtain
\[
u^{2+\epsilon}h_{3}(m)\leq uf(u) + \beta u^{2} + \gamma
\]
or
\[
u^{1+\epsilon}h_{3}(m)\leq f(u)  + \beta u+\frac{\gamma}{u}\leq (1 + \frac{\epsilon}{2})f(u) + \gamma,\,\,u\geq m>1,
\]
which gives
\[
f(u)\geq \delta u^{1+\epsilon},\,\,u\geq m>1
\]
for some $\delta>0$ and another constant $m$. \\
	
	Now, assume that the condition (C) is true. Since $0<\beta\leq\frac{\epsilon\lambda_{0}}{2}$ and $f(u)\geq \lambda u>\lambda_{0}u$, $u>0$, it follows from (C) that
	\[
	\epsilon_{1} F\left(u\right) + \left(2+\epsilon_{2}\right) F\left(u\right) \leq uf(u)+\frac{\epsilon\lambda_{0}}{2} u^{2}+\gamma,
	\]
	where $\epsilon_{1}=\frac{\epsilon\lambda_{0}}{\lambda}>0$ and $\epsilon_{2}=\epsilon-\epsilon_{1}>0$. This implies that for every $u>0$,
	\[
	\begin{aligned}
	uf(u)+\gamma &\geq \left(2+\epsilon_{2}\right) F\left(u\right) + \epsilon_{1} \int_{0}^{u}\left[f\left(s\right)-\lambda s\right]ds\\
	&\geq \left(2+\epsilon_{2}\right) F\left(u\right),
	\end{aligned}
	\]
	which gives (B).

\end{proof}

\begin{Rmk}
It is well known that if $\int_{m}^{+\infty}\frac{ds}{f(s)}=+\infty$ for some $m>0$, the solutions to equation \eqref{equation} is global. On the contrary, it has not been clear yet whether or not the condition $\int_{m}^{+\infty}\frac{ds}{f(s)}<+\infty$ guarantees the blow-up solution. Instead, when $f(u)\geq \delta u^{1+\epsilon}$, $u\geq m$ for some $\epsilon>0$ and $m>0$, the solutions to the equation \eqref{equation} blow up in a finite time, only if the initial data $u_{0}$ is sufficiently large i.e. $\max_{x\in S} u_{0}(x)\geq \max\left[\left(\frac{\omega_{0}}{\delta}\right)^{1/\epsilon}, m\right]$, where $\omega_{0}=\max_{x\in S}d_{\omega}x$ (for more details, see \cite{CC}).\\
\end{Rmk}

In general, only the condition (C) may not guarantee the blow-up solutions for any initial data $u_{0}$. In fact, we can easily see that a linear function $f(u)=au$ satisfies (C) if and only if $a\leq \lambda_{0}$. However, for any function $u_{0}$,
		\[
		\begin{aligned}
		J\left(0\right) & =  -\frac{1}{4}\sum_{x,y\in\overline{S}}\left[u_{0}\left(x\right)-u_{0}\left(y\right)\right]^{2}\omega\left(x,y\right)+\sum_{x\in\overline{S}}\left[\frac{a}{2}u_{0}^{2}\left(x\right)-\gamma\right]\\
		& \leq  \frac{1}{2}\left[a-\lambda_{0}\right]\sum_{x\in\overline{S}}u_{0}^{2}\left(x\right)-\gamma|\overline{S}|\,<\,0,
		\end{aligned}
		\]
		which means that there is no initial data $u_{0}$ satisfying $J(0)>0$, when $f(u)=au$, $a\leq\lambda_{0}$. Of course, it is well known that the solutions to \eqref{equation} is global, in this case.

So, from now on, we are going to discuss when we can find initial data $u_{0}$ satisfies $J(0)>0$.
\begin{Lemma}
	Let $f$ satisfy the condition (C). If there exists $v_{0}>0$ such that $F(v_{0})>\omega_{0}v_{0}^{2}+\gamma_{1}$, where $\gamma_{1}\geq\frac{\gamma|\overline{S}|}{|S|}$, then there exists the initial data $u_{0}$ such that $J(0)>0$. Here, $\omega_{0}=\max_{x\in S}d_{\omega}x$.
\end{Lemma}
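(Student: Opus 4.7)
The plan is to exhibit an explicit piece of initial data and verify $J(0)>0$ by direct computation. The natural candidate is the step function
\[
u_{0}(x)=\begin{cases} v_{0}, & x\in S,\\ 0, & x\in \partial S,\end{cases}
\]
which respects the Dirichlet boundary condition and concentrates all of the positive $F$-energy on the interior while keeping the ``gradient'' term as small as possible.

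First I would substitute this $u_{0}$ into $J(0)$. The squared-difference $[u_{0}(x)-u_{0}(y)]^{2}$ vanishes whenever $x,y$ both lie in $S$ or both lie in $\partial S$, so only cross pairs survive and symmetry gives
\[
\tfrac{1}{4}\sum_{x,y\in\overline{S}}[u_{0}(x)-u_{0}(y)]^{2}\omega(x,y)=\tfrac{v_{0}^{2}}{2}\sum_{x\in S,\,y\in\partial S}\omega(x,y).
\]
Next I would bound the boundary flux by $\omega_{0}|S|$, using
\[
\sum_{y\in\partial S}\omega(x,y)\le \sum_{y\in\overline{S}}\omega(x,y)=d_{\omega}x\le \omega_{0},\qquad x\in S.
\]
For the potential term, $f(0)=0$ yields $F(0)=0$, so
\[
\sum_{x\in\overline{S}}[F(u_{0}(x))-\gamma]=|S|F(v_{0})-|\overline{S}|\gamma.
\]

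Combining these two computations gives
\[
J(0)\ge |S|\Bigl[F(v_{0})-\tfrac{1}{2}\omega_{0}v_{0}^{2}-\tfrac{|\overline{S}|}{|S|}\gamma\Bigr].
\]
Since $\gamma_{1}\ge \tfrac{|\overline{S}|\gamma}{|S|}$ and $\omega_{0}v_{0}^{2}>\tfrac{1}{2}\omega_{0}v_{0}^{2}$, the hypothesis $F(v_{0})>\omega_{0}v_{0}^{2}+\gamma_{1}$ a fortiori yields $F(v_{0})>\tfrac{1}{2}\omega_{0}v_{0}^{2}+\tfrac{|\overline{S}|\gamma}{|S|}$, whence $J(0)>0$.

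There is no real obstacle here; the only delicate point is the bookkeeping of which pairs contribute to the double sum, together with the observation that the gap between $\omega_{0}v_{0}^{2}$ (in the hypothesis) and $\tfrac{1}{2}\omega_{0}v_{0}^{2}$ (what the computation actually needs) is precisely the slack that absorbs the crude estimate $\sum_{y\in\partial S}\omega(x,y)\le \omega_{0}$. One could replace $\omega_{0}v_{0}^{2}$ by a sharper domain-dependent constant at the cost of a less clean statement, but the simple form above is sufficient to conclude.
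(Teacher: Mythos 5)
Your proof is correct, but it takes a different route from the paper's. You fix the single constant datum $u_{0}=v_{0}$ on $S$, $u_{0}=0$ on $\partial S$, and compute the difference term exactly: only the boundary-crossing pairs survive, giving $\tfrac{v_{0}^{2}}{2}\sum_{x\in S,\,y\in\partial S}\omega(x,y)\leq\tfrac{1}{2}\omega_{0}v_{0}^{2}|S|$, so you only need $F(v_{0})>\tfrac{1}{2}\omega_{0}v_{0}^{2}+\gamma_{1}$ and the stated hypothesis gives this with room to spare. The paper instead invokes continuity of $F$ to get an interval $(a,b)$ on which $F(v)>\omega_{0}v^{2}+\gamma_{1}$, and then shows $J(0)>0$ for \emph{every} $u_{0}$ taking values in $(a,b)$ on $S$, using the cruder bound $[u_{0}(x)-u_{0}(y)]^{2}\leq 2[u_{0}^{2}(x)+u_{0}^{2}(y)]$ followed by $\sum_{x}u_{0}^{2}(x)\,d_{\omega}x\leq\omega_{0}\sum_{x}u_{0}^{2}(x)$, which is exactly why the full constant $\omega_{0}$ (rather than $\tfrac{1}{2}\omega_{0}$) appears in the hypothesis. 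Your argument is sharper for the lemma as stated (it shows the hypothesis could be relaxed to $F(v_{0})>\tfrac{1}{2}\omega_{0}v_{0}^{2}+\gamma_{1}$, since the crude bound double-counts the interior-interior edges that a constant datum makes vanish); the paper's argument buys the robustness used immediately afterward in Corollary 3.7(i), namely that a whole open family of initial data works, not just one. Two minor remarks: $F(0)=0$ follows from $F(0)=\int_{0}^{0}f(s)\,ds$ irrespective of the value $f(0)$, and your strict inequality $\omega_{0}v_{0}^{2}>\tfrac{1}{2}\omega_{0}v_{0}^{2}$ silently uses $\omega_{0}>0$, which does hold here because $S$ is connected with nonempty boundary, but a weak inequality would have sufficed anyway since the hypothesis on $F(v_{0})$ is already strict.
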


\begin{proof}
	Since $F$ is continuous on $[0,+\infty)$, there exist $a,b>0$ with $0<a<b$ such that $F(v)>\omega_{0}v^{2}+\gamma_{1}$, $v\in(a,b)$. Then for every function $u_{0}(x)$ satisfying
	\[
	\begin{cases}
		u_{0}\left(x\right)\in (a,b), & x\in S,\\
		u_{0}|_{\partial S}=0.
	\end{cases}
	\]
	It follows that
	\[
	\begin{aligned}
	J\left(0\right) & =  -\frac{1}{4}\sum_{x,y\in\overline{S}}\left[u_{0}\left(x\right)-u_{0}\left(y\right)\right]^{2}\omega\left(x,y\right)+\sum_{x\in\overline{S}}\left[F(u_{0}\left(x\right))-\gamma\right]\\
	& \geq  -\frac{1}{2}\sum_{x,y\in\overline{S}}\left[u_{0}^{2}\left(x\right)+u_{0}^{2}\left(y\right)\right]\omega\left(x,y\right)+\sum_{x\in\overline{S}}\left[F(u_{0}\left(x\right))-\gamma\right]\\
	& = -\sum_{x\in\overline{S}}u_{0}^{2}\left(x\right)d_{\omega}x+\sum_{x\in\overline{S}}[F(u_{0}\left(x\right))-\gamma]\\
	& \geq \sum_{x\in\overline{S}}\left[F(u_{0}\left(x\right))-\omega_{0}u_{0}^{2}\left(x\right)-\gamma\right]\\
	& = \sum_{x\in\overline{S}}\left[F(u_{0}\left(x\right))-\omega_{0}u_{0}^{2}\left(x\right)\right]-\gamma|\overline{S}|\\
	&>\gamma_{1}|S|-\gamma|\overline{S}|\geq0
	\end{aligned}
	\]
	Therefore, $J(0)>0$.
\end{proof}
\begin{Cor}
	\begin{enumerate}[(i)]
		\item 	If there exists $(a,b)$ such that $F(v)>\omega_{0}v^{2}+\gamma_{1}$, $\gamma_{1}\geq \frac{\gamma|\overline{S}|}{|S|}$ for every $v\in(a,b)$, then for every $u_{0}$ with
		\[
		\begin{cases}
		u_{0}\left(x\right)\in (a,b), & x\in S,\\
		u_{0}|_{\partial S}=0,
		\end{cases}
		\]
		we see that $J(0)>0$.
		\item If $F(v)>\omega_{0}v^{2}+\gamma_{1}$, $\gamma_{1}\geq\frac{\gamma|\overline{S}|}{|S|}$ for every $v\in(0,+\infty)$, then the solutions blow up for every initial data $u_{0}>0$.
	\end{enumerate}
\end{Cor}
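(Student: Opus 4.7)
The plan is to observe that both parts are essentially direct re-readings of the preceding Lemma, with an additional appeal to Theorem~\ref{BlowB} for part~(ii); no genuinely new idea is required beyond what appears in that Lemma.

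For (i), I would simply retrace the chain of inequalities carried out at the end of the preceding Lemma. Starting from
\[
J(0) = -\frac{1}{4}\sum_{x,y\in\overline{S}}[u_{0}(x)-u_{0}(y)]^{2}\omega(x,y) + \sum_{x\in\overline{S}}[F(u_{0}(x))-\gamma],
\]
one applies $[u_{0}(x)-u_{0}(y)]^{2}\leq 2[u_{0}^{2}(x)+u_{0}^{2}(y)]$ together with the symmetry of $\omega$ and the identity $d_{\omega}x=\sum_{y\in\overline{S}}\omega(x,y)$ to rewrite the Dirichlet-type sum as $-\sum_{x\in\overline{S}}u_{0}^{2}(x)\,d_{\omega}x$, then uses $d_{\omega}x\leq \omega_{0}$ for $x\in S$ and the boundary condition $u_{0}|_{\partial S}=0$ to discard the $\partial S$ contributions, and finally invokes the pointwise inequality $F(u_{0}(x))>\omega_{0}u_{0}^{2}(x)+\gamma_{1}$ (which holds on $S$ because $u_{0}(x)\in(a,b)$ there) to arrive at $J(0)>\gamma_{1}|S|-\gamma|\overline{S}|\geq 0$. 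The only role of the point $v_{0}$ in the Lemma was to produce, via continuity of $F$, an interval $(a,b)$ on which the pointwise inequality is valid; here such an interval is given by hypothesis, so exactly the same chain of inequalities applies to every admissible $u_{0}$ taking values in $(a,b)$ on $S$.

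For (ii), the hypothesis is that the pointwise inequality $F(v)>\omega_{0}v^{2}+\gamma_{1}$ holds on the entire ray $(0,+\infty)$. Given any positive initial datum $u_{0}$ on $S$ (with $u_{0}=0$ on $\partial S$ as required by the boundary condition of \eqref{equation}), the finiteness of $\overline{S}$ yields real numbers $0<a<\min_{S}u_{0}\leq\max_{S}u_{0}<b$, so that $u_{0}(S)\subset(a,b)\subset(0,+\infty)$. Part~(i) then delivers $J(0)>0$, and Theorem~\ref{BlowB} produces finite-time blow-up.

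There is no real obstacle. The one thing worth verifying is that each step of the Lemma's computation is genuinely pointwise in $x$ and uses only $u_{0}|_{\partial S}=0$ plus the bound $F(u_{0}(x))>\omega_{0}u_{0}^{2}(x)+\gamma_{1}$ at each $x\in S$, so that the distinction between \emph{constructing} an admissible $u_{0}$ and being \emph{handed} one is immaterial; this is immediate by inspection of the displayed inequalities in the Lemma.
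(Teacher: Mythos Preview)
Your proposal is correct and matches the paper's (implicit) approach: the paper states this Corollary without proof, treating it as an immediate consequence of the preceding Lemma's chain of inequalities together with Theorem~\ref{BlowB}. Your observation that the Lemma's computation is pointwise in $x$ and therefore applies verbatim once an interval $(a,b)$ is given, and that part~(ii) then follows by invoking part~(i) on any finite range of a positive $u_{0}$ plus Theorem~\ref{BlowB}, is exactly the intended reading.
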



\section*{Conflict of Interests}
\noindent The authors declare that there is no conflict of interests regarding the publication of this paper.

\section*{Acknowledgments}
\noindent This work was supported by Basic Science Research Program through the National Research Foundation of Korea(NRF) funded by the Ministry of Education (NRF-2015R1D1A1A01059561). In addition, the authors would like to express thanks to anonymous reviewers for their excellent suggestions.

\end{document}